\newcommand{\cX}{{\mathcal X}}
\newtheorem{theorem}{Theorem}
\newtheorem{thm}{Theorem}[section]
\newtheorem{lemma}[thm]{Lemma}
\newtheorem{proposition}[thm]{Proposition}
\theoremstyle{definition}
\newtheorem{remark}[thm]{Remark}
\newtheorem{note}[thm]{Note}
\numberwithin{equation}{section}
\def\textmatrix#1&#2\\#3&#4\\{\bigl({#1 \atop #3}\ {#2 \atop #4}\bigr)}
\def\dispmatrix#1&#2\\#3&#4\\{\left({#1 \atop #3}\ {#2 \atop #4}\right)}
\author{Daniel Alpay, Tirthankar Bhattacharyya, Abhay Jindal and Poornendu Kumar}
\address{Department of Mathematics\\
	Chapman University\\
	Orange, CA 92866, USA.}
\email{alpay@chapman.edu}
\address{Department of Mathematics\\
	Indian Institute of Science\\
	Bangalore 560012, India}
\email{tirtha@iisc.ac.in; abjayj@iisc.ac.in}
\address{Department of Mathematics\\
	University of Manitoba, Winnipeg, Canada, R3T 2N2}
\email{poornendukumar@gmail.com, Poornendu.Kumar@umanitoba.ca}
\begin{document}
\thanks{
  {\em 2020 Mathematics Subject Classification.} 30C80, 32A38, 47B32, 46E22. \\
	{\em Key words and phrase}: Schwarz lemma, Schur algorithm,  Poincar\'e contractivity, Multiplier algebras, Complete Nevanlinna Pick kernels, Indefinite inner products.}
	\title{Complete Nevanlinna-Pick kernels, the Schwarz lemma and the Schur algorithm}
	\maketitle
	\begin{abstract}
          We investigate the Schwarz lemma and the Schur algorithm for elements in the unit ball of the multiplier algebra of a reproducing kernel Hilbert space on the open unit ball whose kernel satisfies the complete Nevanlinna-Pick property. This paper also explores the Poincar\'e contractivity for elements in the unit ball of the multiplier algebra of a reproducing kernel Hilbert space whose kernel satisfies the property mentioned above. 
		\end{abstract}
	
                \section{Introduction}
                \setcounter{equation}{0}
                Let $s$ be a Schur function, that is a function analytic and bounded by $1$ in the open unit disc $\mathbb D$. Schwarz lemma asserts that if $s(0)=0$ then $s(z)=z\sigma(z)$ where $\sigma$ is itself a Schur function. Equivalently
                \[
|s(z)|\le |z|,\quad z\in\mathbb D.
                  \]
More generally for $a\in \mathbb D$ we have
\[
\frac{s(z)-s(a)}{1-s(z)\overline{s(a)}}=\frac{z-a}{1-z\overline{a}}\sigma(z)
  \]
  where as above $\sigma$ is a Schur function. Equivalently
  \[
s(z)=\frac{s(a)+b_a(z)\sigma(z)}{1+b_a(z)\sigma(z)\overline{s(a)}},\quad{\rm where}\quad b_a(z)=\frac{z-a}{1-z\overline{a}},
    \]
or, still equivalently, the {\em Poincar\'e contractivity} property of Schur functions
 \begin{equation}
  \label{schur-one-point}
    \left|\frac{s(z)-s(a)}{1-s(z)\overline{s(a)}}\right|\le \left|\frac{z-a}{1-z\overline{a}}\right|,\quad z\in\mathbb D.
    \end{equation}

Since the Banach algebra of bounded analytic functions on the open unit disc $\mathbb D$ is the multiplier algebra of the Hardy space, it is natural to ask if we can generalize the above to multiplier algebras of reproducing kernel Hilbert spaces on the open unit ball. A class of kernels which have been greatly studied in recent times is the complete Nevanlinna Pick (CNP) kernels. As the name suggests, these arose in relation to interpolation problems. The most prominent among all CNP kernels is the Drury-Arveson kernel
$$a_m(\lambda , \mu) = \frac{1}{1 -\langle \lambda , \mu \rangle}$$
defined in the open unit ball $\mathbb B_m$ of $\mathbb C^m$. Here $m$ is a cardinal and it is possible that it is $\aleph_0$ in which case, the base set $\mathbb B_m$ of the kernel is the open unit ball of $l^2$. A remarkable theorem relates any CNP kernel to an $a_m$ for some $m$. We quote parts of the theorem which we shall use in this paper. A  kernel $k$ on a set $\Omega$ is called {\em irreducible} if $k( z,  w)\neq 0$ for all $ z, w\in \Omega$, and $k_{ w}$ and $k_{  v}$ are linearly independent if $ v\neq  w$. Instead of defining a CNP kernel, we mention below a characterizing property which is the one we shall use. For details on CNP kernels, see \cite{AM}.

\begin{thm}[\cite{AM}, Theorem 8.2] \label{CNP-beta} Let $k$ be an irreducible kernel on a set $\Omega$. Then $k$ is CNP if and only if for some cardinal $m$, there is an injection $\beta : \Omega \rightarrow \mathbb B_m$ and a nowhere vanishing function $\delta$ on $\Omega$ such that
$$k(z,w) = \delta(z) \overline{\delta(w)} a_m(\beta(z) , \beta(w)).$$
If, in addition, there is a topology on $\Omega$ such that $k$ is continuous on $\Omega \times \Omega$, then $\beta$ is a continuous embedding of $\Omega$ into $\mathbb B_m$.
\end{thm}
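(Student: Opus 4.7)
The plan is to reduce the theorem to Agler's internal characterization of complete Nevanlinna--Pick kernels and then extract the embedding $\beta$ and the weight $\delta$ via a single application of the Kolmogorov--Moore factorization. Throughout, the ``if'' direction is the easier one: given $k(z,w) = \delta(z)\overline{\delta(w)}\, a_m(\beta(z),\beta(w))$, the CNP property of $a_m$ is known (Drury--Arveson), and both pullback by $\beta$ and multiplication by the rank-one positive factor $\delta(z)\overline{\delta(w)}$ preserve the CNP property. So the substance of the argument lies in the ``only if'' direction.

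First, fix a base point $w_0\in\Omega$. By irreducibility $k(z,w_0)\neq 0$ for every $z$, so the normalized kernel
\[
\tilde k(z,w) \;=\; \frac{k(z,w)\,k(w_0,w_0)}{k(z,w_0)\,k(w_0,w)}
\]
is a well-defined positive kernel with $\tilde k(\cdot,w_0)\equiv 1$. Setting $\delta(z)=k(z,w_0)/\sqrt{k(w_0,w_0)}$, one has $k(z,w)=\delta(z)\overline{\delta(w)}\tilde k(z,w)$, and $\tilde k$ inherits the CNP property from $k$. The key lemma (essentially McCullough--Quiggin) that I would invoke is: a normalized irreducible kernel $\tilde k$ is CNP if and only if the function $1 - 1/\tilde k(z,w)$ is a positive kernel on $\Omega$. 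Granting this, apply Kolmogorov--Moore to obtain a Hilbert space $\cH$ and a map $F\colon\Omega\to\cH$ with
\[
1 - \frac{1}{\tilde k(z,w)} \;=\; \la F(z),F(w)\ra_{\cH}.
\]
Take $m=\dim\cH$ and identify $\cH$ isometrically with the $m$-dimensional $\ell^2$-space. Because $\tilde k(z,z)=k(z,z)k(w_0,w_0)/|k(z,w_0)|^2$ is strictly positive (using irreducibility and positivity of the diagonal), one gets $\|F(z)\|^2 = 1 - 1/\tilde k(z,z) < 1$, so $F$ takes values in $\mathbb B_m$. Defining $\beta\bydef F$ and rearranging yields $\tilde k(z,w) = a_m(\beta(z),\beta(w))$, which gives the desired factorization of $k$.

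Injectivity of $\beta$ is then a short argument: if $\beta(z)=\beta(z')$, then $\tilde k(z,\cdot)=\tilde k(z',\cdot)$, and therefore $k(z,\cdot)/\delta(z) = k(z',\cdot)/\delta(z')$; this makes $k_z$ and $k_{z'}$ linearly dependent, so irreducibility forces $z=z'$. For the continuity statement, if $k$ is jointly continuous then so is $\tilde k$ (as $\delta$ is continuous and nowhere zero), and hence $\la F(z),F(w)\ra$ is continuous on $\Omega\times\Omega$; the identity
\[
\|F(z)-F(z_0)\|^2 \;=\; \la F(z),F(z)\ra - 2\,\mathrm{Re}\,\la F(z),F(z_0)\ra + \la F(z_0),F(z_0)\ra
\]
shows that $F$ is continuous in the norm of $\ell^2_m$, and injectivity plus norm continuity give the topological embedding into $\mathbb B_m$.

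The main obstacle is, without question, the McCullough--Quiggin lemma that CNP (after normalization) is equivalent to positivity of $1-1/\tilde k$. The forward implication can be extracted by applying the definition of CNP to a sufficiently rich family of finite Pick data and passing to the limit, but the reverse implication needs a transfer-function/lurking-isometry realization, or alternatively an appeal to the universality of the Drury--Arveson space as a multiplier model. Once this structural fact is in hand, everything else in the proof is essentially bookkeeping around the Kolmogorov dilation. A secondary, much smaller, obstacle is verifying that the inequality $\|\beta(z)\|<1$ is strict rather than merely $\leq 1$, which requires the non-vanishing of $k(z,w_0)$ on the diagonal---precisely what irreducibility ensures.
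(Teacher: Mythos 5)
The paper does not prove this statement at all: it is quoted verbatim from \cite[Theorem 8.2]{AM}, so there is no internal proof to compare against. Your argument is essentially the standard proof from that reference --- normalize $k$ at a base point $w_0$ using irreducibility, invoke the McCullough--Quiggin characterization (a normalized irreducible kernel $\tilde k$ is CNP iff $1-1/\tilde k$ is a positive kernel), and apply Moore--Kolmogorov factorization to produce $\beta=F$ with $\|F(z)\|<1$ --- and the normalization algebra, the injectivity argument via linear independence of the kernel functions, and the norm-continuity computation are all correct. The only substantive dependency is the McCullough--Quiggin equivalence itself, which you rightly flag as the main obstacle and leave unproved (defensible, since the target is itself a cited textbook theorem); the one small imprecision is at the very end, where continuity plus injectivity of $\beta$ gives a continuous injection but not automatically a topological embedding (continuity of $\beta^{-1}$ on $\beta(\Omega)$ does not follow from the argument given, and the result is usually stated as asserting only continuity and injectivity).
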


A positive kernel $k$ on $\Omega$ gives rise to a reproducing kernel Hilbert space $\mathcal H_k$ in the sense of Chapter 2 of \cite{AM}. Let $\mathbb C^{p\times q}$ denote the algebra of $p\times q$ matrices. A $\mathbb C^{p\times q}$-valued function $s$ on $\Omega$ is called a {\em Schur multiplier} of the Hilbert space $\mathcal H_k$ if the kernel
                \begin{equation}
(I_p-s(z)s(w)^*) k(z , w) \label{smultiplier}
\end{equation}
is a positive kernel on $\Omega$. We shall use the notation $\mathcal M^{p\times q}_k$ for the set of such multipliers with $\mathcal M_k$ standing for $\mathcal M^{1\times 1}_k$. Setting $z=w$ we see that a Schur multiplier is always bounded by $1$. The converse is not always true. For instance, the function $\varphi(z) = z$ is not a Schur multiplier of the Dirichlet space. See also \cite[p. 6]{akap1}.

\begin{note} \label{notationHs}
Since the kernel in \eqref{smultiplier} is a positive kernel, there is a reproducing kernel Hilbert space associated with it. We shall denote it by $\mathcal H(s)$. \end{note}

We denote the reproducing kernel Hilbert space corresponding to $a_m$ by $\mathcal A_m$ and call it the Drury-Arveson space. An important example of an element of $\mathcal M^{1 \times m}_{a_m}$ is the {\em Blaschke factor}, defined for $\alpha\in\mathbb C^m$ by
\begin{equation} \label{bf}
b_\alpha(\lambda)=\frac{1-\langle \alpha, \alpha \rangle}{1-\langle \lambda,\alpha \rangle}(\lambda-\alpha )(I_m-\alpha^*\alpha)^{-1/2}.
  \end{equation}
  Note that for $\lambda,\mu\in\mathbb{B}_m$,
  \begin{equation}
    \label{yui}
    \frac{1-  b_a(\lambda) \; b_a(\mu)^* }{1-\langle \lambda , \mu\rangle}=\frac{1-\langle a,a\rangle}{(1-\langle \lambda,a\rangle)(1-\langle a,\mu\rangle}.
  \end{equation}
  See \cite[Theorem 2.2.2,  page 26]{rudin-ball} and \cite[Proposition 4.1, page 11]{akap1}.
  
The  multiplier algebra for the reproducing kernel Hilbert space arising out of a complete Nevanlinna-Pick kernel is yet to be well-understood. Unlike the classical case, where the multiplier algebra of the Hardy space is $H^\infty(\mathbb{D})$, Arveson showed in \cite{Arveson} that the multiplier algebra of the Drury-Arveson space is strictly smaller than $H^\infty(\mathbb{B}_d)$. A definitive characterization for the multiplier algebra of complete Nevanlinna-Pick kernels is currently unknown. Nevertheless, a realization formula exists (\cite{AM, btv}). Furthermore, even for polynomials $p$, the supremum norm of $p$ in general does not dominate the operator norm of $M_q$ on $H^2_d$, see \cite{Arveson, Shalit}. Explore further on this topic by referring to Fang's survey \cite{Fang} and \cite{AHMC, Clouatre-Hartz, Dru}. This paper is an effort to understand the multiplier algebra of a CNP space. 

There are numerous approaches to interpolation by Schur multipliers in the open unit disc and in the open unit ball. Here we will use the reproducing kernel approach
(see \cite{abds2,Dym_CBMS}). We will use the Schur algorithm as developed in \cite{abk} for the Drury-Arveson space. In the reproducing kernel approach to interpolation, three main steps can be distinguished (these same steps are of course present, in one way or the other, in the other approaches).

        \begin{enumerate}
        \item Build from the interpolation data a matrix/operator-valued function $\Theta$ with some metric properties (contractivity with respect to an indefinite metric).
        \item Show that a map is a contraction and deduce a linear fractional transformation which, {\it a  priori}, describes a part of the solutions.
        \item Show that one gets all solutions with the aforementioned linear fractional transformation.
        \end{enumerate}
The results start from Section \ref{SL-section}. Each section has a theorem, the main result of that section, with the same number as that of the section. In the first two sections, we shall discuss the Schwarz lemma for functions in the unit ball of the multiplier algebra of CNP kernels. Specifically, Section 1 will focus on a class of functions such that the function vanishes at some point, while Section 2 will address functions which need not vanish. To prove these results, we use the theory of indefinite inner product spaces. Following this, we shall proceed to establish Poincar\'e contractivity for elements in the unit ball of multiplier algebra which sheds some light on the multiplier algebra of CNP spaces. Finally, in Section 4, we shall present the Schur algorithm for functions in the unit ball of the multiplier algebra of CNP kernels.

\section{A Schwarz lemma at a point where the function vanishes} \label{SL-section}
\setcounter{equation}{0}

\begin{theorem}
  Let $\Omega$ be an open subset of the open unit ball, and let $k$ be a CNP kernel on $\Omega$ with its associated $\beta$ as in Theorem \ref{CNP-beta} above.
Then, $S \in \mathcal M_k$ if and only if $S(z)=G(\beta(z))$ for some $G \in \mathcal M_{a_m}$. Furthermore, $S(w_0)=0$ if and only if
\[
S(z)=b_{\beta(w_0)}(\beta(z))G_1(\beta(z))
\]
where $b_{\beta(w_0)}$ is a Blaschke factor as in \eqref{bf} above and $G_1 \in \mathcal M^{m \times 1}_{a_m}$.
\label{Maintheorem}
\end{theorem}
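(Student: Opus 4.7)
The plan is to handle the two biconditionals in turn. For the first, the factorization $k(z,w)=\delta(z)\overline{\delta(w)}\,a_m(\beta(z),\beta(w))$ from Theorem \ref{CNP-beta}, together with $\delta$ being nowhere zero, shows that $S\in\mathcal M_k$ is equivalent to the pulled-back kernel $(1-S(z)\overline{S(w)})\,a_m(\beta(z),\beta(w))$ being positive on $\Omega\times\Omega$. The implication $G\in\mathcal M_{a_m}\Rightarrow S=G\circ\beta\in\mathcal M_k$ is immediate: restrict the positivity of $(1-G(\lambda)\overline{G(\mu)})\,a_m(\lambda,\mu)$ along $(\lambda,\mu)=(\beta(z),\beta(w))$, then multiply by the positive rank-one kernel $\delta(z)\overline{\delta(w)}$. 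Conversely, injectivity of $\beta$ makes $G(\beta(z)):=S(z)$ well defined on $\beta(\Omega)$, and the kernel positivity above says precisely that $G$ is a contractive Schur multiplier of $a_m$ restricted to $\beta(\Omega)\times\beta(\Omega)$. The key step is to extend $G$ to a multiplier on all of $\mathbb B_m$ without enlarging its multiplier norm; this is the infinite Nevanlinna--Pick solvability in the Drury--Arveson algebra, available because $a_m$ is CNP, and is obtained as a weak-$*$ cluster point of solutions of the finite Pick problems indexed by finite subsets of $\beta(\Omega)$.

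For the second biconditional, the first equivalence reduces the statement to the Drury--Arveson analog: for $G\in\mathcal M_{a_m}$ and $a:=\beta(w_0)$, one has $G(a)=0$ if and only if $G=b_a G_1$ for some $G_1\in\mathcal M^{m\times 1}_{a_m}$. The ``if'' direction is immediate from $b_\alpha(\alpha)=0$, visible in \eqref{bf}, together with multiplier closure under products. For the ``only if'' direction I would invoke Leech's theorem for the CNP multiplier algebra: such a factorization exists with $G_1\in\mathcal M^{m\times 1}_{a_m}$ if and only if the kernel
\[
\bigl(b_a(z)b_a(w)^{*}-G(z)\overline{G(w)}\bigr)\,a_m(z,w)
\]
is positive. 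Using identity \eqref{yui}, this kernel rewrites as
\[
K(z,w)\;-\;\frac{1-\langle a,a\rangle}{(1-\langle z,a\rangle)(1-\langle a,w\rangle)},
\]
where $K(z,w):=(1-G(z)\overline{G(w)})\,a_m(z,w)$ is the reproducing kernel of $\mathcal H(G)$ from Note \ref{notationHs}. The vanishing $G(a)=0$ forces $K(\cdot,a)=a_m(\cdot,a)$ and $K(a,a)=1/(1-\langle a,a\rangle)$, so the subtracted term is exactly the normalized rank-one kernel $K(z,a)K(a,w)/K(a,a)$. Thus the difference is the reproducing kernel of the closed subspace $\{h\in\mathcal H(G):h(a)=0\}$, hence positive, and Leech delivers the required $G_1$.

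The main technical obstacles are the two non-elementary external inputs: the Pick-type extension of Schur multipliers from $\beta(\Omega)$ to $\mathbb B_m$, which depends crucially on the CNP property of $a_m$; and the appropriate row/column form of Leech's theorem for Drury--Arveson multipliers. Both are well documented in the CNP literature, but in line with the paper's indefinite-metric viewpoint, one might alternatively derive them via realizations and contractive coefficient operators on Krein spaces, with the \emph{inner} character of $b_a$ (encoded in \eqref{yui}) playing the role of the classical Blaschke factor in the disc.
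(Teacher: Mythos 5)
Your proposal is correct, and its engine for the factorization at $w_0$ coincides with the paper's: both reduce to showing that the kernel $\bigl(b_a(z)b_a(w)^*-G(z)\overline{G(w)}\bigr)a_m(z,w)$, $a=\beta(w_0)$, is positive and then invoking Leech's theorem (Theorem \ref{leech-thm}), with the converse direction handled via \eqref{yui} in both cases. The differences are in the two supporting steps. For the positivity, the paper constructs the $J$-inner function $\Theta_\nu$, identifies the one-dimensional space $J\mathcal H(\Theta_\nu)$ spanned by $f_\nu$, shows that $f\mapsto(1,-s)f$ is an isometry $\tau$ into $\mathcal H(s)$, and reads the positivity off $I-\tau\tau^*\ge 0$; your rank-one subtraction $K-K(\cdot,a)K(a,\cdot)/K(a,a)$ is precisely the kernel of $I-\tau\tau^*$ (since $\tau\tau^*$ is the projection onto the span of $k_s(\cdot,a)$), so your route is equivalent but bypasses the $\Theta_\nu$/indefinite-metric apparatus --- which is fine here, though that apparatus is exactly what the paper reuses in Sections 2--4, so the shortcut does not generalize for free. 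For the first equivalence ($S\in\mathcal M_k$ iff $S=G\circ\beta$), the paper simply cites part 3 of Theorem 3.1 of Ball--Trent--Vinnikov, whereas you sketch a self-contained proof: cancel the rank-one factor $\delta(z)\overline{\delta(w)}$, then extend the contractive multiplier from $\beta(\Omega)$ to $\mathbb B_m$ by solving the finite Pick problems (the Pick property of $a_m$) and taking a weak-$*$ cluster point in the unit ball of $\mathcal M_{a_m}$; this is correct, including for infinite $m$, since that ball is compact in the relevant topology and point evaluations are continuous there. In a final write-up you should make explicit the two small points the paper spells out: injectivity of $\beta$ (Theorem \ref{CNP-beta}) is what makes $G:=S\circ\beta^{-1}$ well defined, and $b_a$ is itself a contractive $1\times m$ multiplier by \eqref{yui}, which is what turns ``closure under products'' into the paper's Step 4 showing that $b_aG_1$ is again a Schur multiplier vanishing at $a$.
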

This section is devoted to the proof of the theorem above. Since a CNP kernel $k$ is fixed, it fixes $m$ and $\beta$. Hence, for notational brevity, we shall denote $\mathcal A_m$ by $\mathcal A$. We shall need to consider indefinite inner products on direct sums of the Drury-Arveson space. Let $\mathcal A^{p , q}$ denote the vector space direct sum of $p+q$  copies of $\mathcal A$ endowed with the indefinite inner product
\begin{equation*}
    [F,G]_{p\times q}= \sum_{i=1}^p \langle f_i , g_i \rangle_{\mathcal A} - \sum_{i=p+1}^q \langle f_i , g_i \rangle_{\mathcal A}
    \end{equation*}
    for $F=(f_1, \ldots , f_p , f_{p+1}, \ldots , f_{p+q})$ and $G=(g_1, \ldots , g_p , g_{p+1}, \ldots , g_{p+q})$. We present, with proofs, a special case of the computations from \cite{abk}. A more general case, needed to iterate the present procedure, is presented in the last section of this paper.

\begin{proposition}
Consider, for every $\nu$ in $\mathbb B_m$, the vector $ f_\nu$ in $\mathcal A^{1, 1}$ whose value at $\lambda$ in $\mathbb B_m$ is given by
  \begin{equation*}
  \label{fnu}
  f_\nu(\lambda)=\begin{pmatrix}\frac{1}{1-\langle \lambda,\nu\rangle}\\0\end{pmatrix}.
  \end{equation*}
  The one-dimensional space spanned by $f_\nu$ in $\mathcal A^{1, 1}$ is a one-dimensional reproducing kernel Hilbert space with reproducing kernel
\begin{equation*}
  K_{\Theta_\nu}(\lambda,\mu)=\frac{J-\Theta_\nu(\lambda)J\Theta_\nu(\mu)^*}{1-\langle \lambda,\mu\rangle} \; \text{ where }
\Theta_\nu(\lambda)=\begin{pmatrix}b_\nu(\lambda)&0\\
0_{1 \times m}&1\end{pmatrix}\; \text{ and } \;
J=\begin{pmatrix}I&0\\
0_{1 \times m}&-1\end{pmatrix}
  \end{equation*}
on $\mathbb C^m \oplus \mathbb C$ or $l^2 \oplus \mathbb C$.
\end{proposition}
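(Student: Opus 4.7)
The plan is to compute the reproducing kernel of $\mathrm{span}\{f_\nu\}$ directly from the definition and then match it against the claimed block formula using the Blaschke identity \eqref{yui}.

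\textbf{Positivity of the form and the abstract kernel.} The top coordinate of $f_\nu$ is the reproducing-kernel element $k_\nu^{\cA}(\lambda)=(1-\langle\lambda,\nu\rangle)^{-1}$ of $\cA$ at $\nu$, and the bottom coordinate is $0$, so
\[
[f_\nu, f_\nu]_{1\times 1}=\langle k_\nu^{\cA}, k_\nu^{\cA}\rangle_{\cA}-0= a_m(\nu,\nu)=\frac{1}{1-\langle\nu,\nu\rangle}>0.
\]
Hence the restriction of $[\,\cdot\,,\,\cdot\,]_{1\times 1}$ to $\mathrm{span}\{f_\nu\}$ is a genuine positive definite inner product, and the span is a one-dimensional Hilbert space of $\mathbb C^2$-valued functions on $\mathbb B_m$; point evaluation at any $\lambda$ is automatically bounded, so one indeed has an RKHS. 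For any one-dimensional Hilbert space of $\mathbb C^2$-valued functions spanned by a nonzero vector $v$, the matrix-valued reproducing kernel is $K(\lambda,\mu)=v(\lambda)v(\mu)^*/[v,v]$; plugging in $v=f_\nu$ gives
\[
K_{\Theta_\nu}(\lambda,\mu)=(1-\langle\nu,\nu\rangle)\begin{pmatrix}\dfrac{1}{(1-\langle\lambda,\nu\rangle)(1-\langle\nu,\mu\rangle)} & 0\\ 0 & 0\end{pmatrix}.
\]

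\textbf{Matching the $\Theta_\nu$ formula.} A direct block calculation yields $\Theta_\nu(\lambda) J \Theta_\nu(\mu)^*=\mathrm{diag}\bigl(b_\nu(\lambda)b_\nu(\mu)^*,\,-1\bigr)$, so
\[
\frac{J-\Theta_\nu(\lambda) J \Theta_\nu(\mu)^*}{1-\langle\lambda,\mu\rangle}=\begin{pmatrix}\dfrac{1-b_\nu(\lambda)b_\nu(\mu)^*}{1-\langle\lambda,\mu\rangle} & 0\\ 0 & 0\end{pmatrix},
\]
where the outer $J$ is read as the $2\times 2$ signature $\mathrm{diag}(1,-1)$ inherited from $\cA^{1,1}$, while the inner $J$ is the $(m+1)\times(m+1)$ signature on $\mathbb C^m\oplus\mathbb C$ contracting the target side of $\Theta_\nu$. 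The Blaschke identity \eqref{yui} rewrites the $(1,1)$ entry as $\frac{1-\langle\nu,\nu\rangle}{(1-\langle\lambda,\nu\rangle)(1-\langle\nu,\mu\rangle)}$, which coincides with the expression obtained abstractly in the previous step, completing the proof.

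\textbf{Main subtlety.} There is no serious analytic difficulty; the only wrinkle is the bookkeeping of two distinct signature matrices of different sizes, both denoted $J$. Once one agrees that the $J$ preceding the subtraction lives on $\mathbb C\oplus\mathbb C$ while the one sandwiched between $\Theta_\nu(\lambda)$ and $\Theta_\nu(\mu)^*$ lives on $\mathbb C^m\oplus\mathbb C$, the whole verification collapses into a single block multiplication and one application of \eqref{yui}.
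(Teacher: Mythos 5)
Your proof is correct and follows essentially the same route as the paper: both reduce the claim to the rank-one identity $K(\lambda,\mu)=f_\nu(\lambda)f_\nu(\mu)^*/\|f_\nu\|^2$, carry out the block multiplication of $\Theta_\nu(\lambda)J\Theta_\nu(\mu)^*$, and invoke \eqref{yui} to match the $(1,1)$ entry. Your extra remarks — checking that $[f_\nu,f_\nu]_{1\times1}>0$ so the span really is a Hilbert space, and flagging that the two $J$'s in the displayed kernel have different sizes — are correct clarifications of points the paper leaves implicit, but they do not change the argument.
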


\begin{proof} By the formula for the reproducing kernel of a finite dimensional reproducing Hilbert space we have to prove that
  \[
    \frac{J-\Theta_\nu(\lambda)J \Theta_\nu(\mu)^*}{1-\langle \lambda,\mu\rangle}=\
    \frac{f_\nu(\lambda)f_\nu(\mu)^*}{\|f_\nu\|^2_{\mathcal A^{1,1}}},\quad \lambda,\mu\in\mathbb B_N.
    \]
We have
\[
\begin{split}
\frac{J-\Theta_\nu(\lambda)J \Theta_\nu(\mu)^*}{1-\langle \lambda,\mu\rangle}
&=\frac{J-\begin{pmatrix}b_\nu(\lambda)&0\\ 0_{1\times m}&1\end{pmatrix}J \begin{pmatrix}b_\nu(\mu)&0\\0_{1\times m}&1\end{pmatrix}^*}{1-\langle \lambda,\mu\rangle}\\
&=\begin{pmatrix}\frac{1-b_\nu(\lambda)b_\nu(\mu)^*}{1-\langle \lambda,\nu\rangle}&0\\0&0\end{pmatrix}\\
&=\begin{pmatrix}\frac{1-\langle \nu,\nu\rangle}{(1-\langle \lambda,\nu\rangle)(1-\langle \mu,\nu\rangle)}&0\\0&0\end{pmatrix}\\
&=f_\nu(\lambda)f_\nu(\mu)^*
\end{split}
\]
which ends the proof since
\[
  \|f_\nu\|^2_{\mathcal A^{1,1}}=\left\|\frac{1}{1-\langle \lambda,\nu\rangle}\right\|^2_{\mathcal A}=\frac{1}{1-\langle \nu,\nu\rangle}.
\]
\end{proof}

Note that the one-dimensional space above is $J\mathcal H(\Theta_\nu)$ (recall Note \ref{notationHs}).

\begin{proposition}
  Let $s$ be an element of $\mathcal M_a$ and assume $s(\nu)=0$ at a point $\nu$ in $\mathbb B_m$. The map $\tau$ which sends
  \[
f\mapsto \begin{pmatrix}1, &-s\end{pmatrix}f
  \]
  is an isometry from $J\mathcal H(\Theta_\nu)$ into $\mathcal H(s)$, with adjoint
  \begin{equation}
    \label{adjoint}
    \tau^*\left(k_s(\cdot, \mu)\right)=K_{\Theta_\nu}(\cdot,\mu)\begin{pmatrix}1\\ \\ -\overline{s(\mu)}\end{pmatrix}
      \end{equation}
\end{proposition}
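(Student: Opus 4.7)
The plan is to reduce the proposition to a computation on the single vector $f_\nu$, which spans $J\mathcal{H}(\Theta_\nu)$. The key observation is that because $s(\nu)=0$, the image $\tau(f_\nu)$ agrees with the kernel section $k_s(\cdot,\nu)$ of $\mathcal{H}(s)$: a direct computation gives
\[
\tau(f_\nu)(\lambda) = \begin{pmatrix} 1 & -s(\lambda) \end{pmatrix} f_\nu(\lambda) = \frac{1}{1-\langle \lambda,\nu\rangle} = k_s(\lambda,\nu),
\]
since the second component of $f_\nu$ vanishes and $s(\nu)=0$ collapses the numerator $1 - s(\lambda)\overline{s(\nu)}$ of $k_s(\lambda,\nu)$ to $1$. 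In particular $\tau$ does map $J\mathcal{H}(\Theta_\nu)$ into $\mathcal{H}(s)$.

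From this, the isometry property reduces to a single norm comparison. The preceding proposition gives $\|f_\nu\|_{\mathcal{A}^{1,1}}^{2} = 1/(1-\langle \nu,\nu\rangle)$, while the reproducing kernel property of $\mathcal{H}(s)$ yields $\|\tau(f_\nu)\|_{\mathcal{H}(s)}^{2} = k_s(\nu,\nu) = (1-|s(\nu)|^{2})/(1-\langle \nu,\nu\rangle) = 1/(1-\langle \nu,\nu\rangle)$, again because $s(\nu)=0$. The two agree, so $\tau$ is isometric on the one-dimensional space spanned by $f_\nu$, which is all of $J\mathcal{H}(\Theta_\nu)$.

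For the adjoint I would read both sides of the defining identity through the relevant reproducing kernels. For any $h \in J\mathcal{H}(\Theta_\nu)$ and $\mu \in \mathbb{B}_m$,
\[
\langle \tau(h), k_s(\cdot,\mu)\rangle_{\mathcal{H}(s)} = \tau(h)(\mu) = \begin{pmatrix} 1 & -s(\mu) \end{pmatrix} h(\mu),
\]
while the reproducing kernel property of $K_{\Theta_\nu}$ gives $\langle h, K_{\Theta_\nu}(\cdot,\mu) v \rangle_{J\mathcal{H}(\Theta_\nu)} = v^{*} h(\mu)$ for any coefficient vector $v$. Choosing $v = \begin{pmatrix} 1 \\ -\overline{s(\mu)} \end{pmatrix}$, so that $v^{*} = \begin{pmatrix} 1 & -s(\mu) \end{pmatrix}$, the two expressions coincide for every $h$, and the formula \eqref{adjoint} follows by uniqueness of the adjoint. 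The only mild subtlety to confirm is that $K_{\Theta_\nu}(\cdot,\mu)v$ genuinely belongs to the one-dimensional space $J\mathcal{H}(\Theta_\nu)$; this is immediate from the rank-one factorization of $K_{\Theta_\nu}$ through $f_\nu$ recorded in the previous proposition, so the whole argument amounts to a short reproducing-kernel bookkeeping.
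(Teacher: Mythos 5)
Your argument is correct and follows essentially the same route as the paper: the same computation showing $\tau(f_\nu)=k_s(\cdot,\nu)$ and the same norm comparison give the isometry on the one-dimensional space. The only difference is that you derive the adjoint formula \eqref{adjoint} explicitly from the reproducing properties of $k_s$ and $K_{\Theta_\nu}$, whereas the paper simply cites it as a special case of the general formula for the adjoint of a multiplication operator; your computation is exactly the content of that citation.
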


\begin{proof}
  \[
    (\tau(f_\nu))(\lambda)=\frac{1}{1-\langle \lambda,\nu\rangle}=    \frac{1-s(\lambda)\overline{s(\nu)}}{1-\langle \lambda,\nu\rangle}=k_s(\lambda,\nu)
\]
since $s(\nu)=0$. Thus
\[
  \|\tau(f_\nu)\|^2_{\mathcal H(k_s)}=k_s(\nu,\nu)=\frac{1}{1-\langle\nu,\nu\rangle},
\]
and hence the isometry property.  \smallskip

Formula \eqref{adjoint} is a special case of the formula for the adjoint of a multiplication operator between Hilbert spaces; see e.g. \cite[Corollary 5.22]{Ragh}.
\end{proof}

We shall now prove a result which resembles the Schwarz lemma.  For that, we shall need a theorem due to Leech. We state it below to the extent we need it and not in utmost generality.

        \begin{thm}[\cite{AM}, Theorem 8.57]
          Let $A$ and $B$ be respectively $\mathbb C^{p\times q}$-valued and $\mathbb C^{p\times r}$-valued functions continuous in $\mathbb B_m$ and assume that the kernel
          \begin{equation*}
           \label{AABB}
            \frac{A(\lambda)A(\mu)^*-B(\lambda)B(\mu)^*}{1-\langle \lambda,\nu\rangle}
          \end{equation*}
          is positive definite in $\mathbb B_m$. Then there exists a $\mathbb C^{q\times r}$-valued schur multiplier $C$ such that  $B=AC$.
          \label{leech-thm}
\end{thm}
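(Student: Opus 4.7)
The plan is to prove Leech's theorem by the standard \emph{lurking-isometry} construction and then invoke the Ball--Trent--Vinnikov realization theorem for Schur multipliers of $\mathcal A_m$.

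First, I would apply a Kolmogorov factorization to the positive kernel: there exist a Hilbert space $\mathcal L$ and a continuous function $F:\mathbb B_m\to\mathcal B(\mathcal L,\mathbb C^p)$ with
\[
F(\lambda)F(\mu)^* \;=\; \frac{A(\lambda)A(\mu)^* - B(\lambda)B(\mu)^*}{1-\langle\lambda,\mu\rangle}.
\]
Clearing the denominator and rearranging yields the Gram-matrix identity
\[
A(\lambda)A(\mu)^* + \Lambda(\lambda)\Lambda(\mu)^* \;=\; B(\lambda)B(\mu)^* + F(\lambda)F(\mu)^*,
\]
where $\Lambda(\lambda):\mathbb C^m\otimes\mathcal L\to\mathbb C^p$ is defined by $e_i\otimes v\mapsto\lambda_i F(\lambda)v$, so that $\Lambda(\lambda)\Lambda(\mu)^* = \langle\lambda,\mu\rangle F(\lambda)F(\mu)^*$.

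Second, this identity says that the correspondence
\[
V:\begin{pmatrix}A(\mu)^*\xi\\ \Lambda(\mu)^*\xi\end{pmatrix} \;\longmapsto\; \begin{pmatrix}B(\mu)^*\xi\\ F(\mu)^*\xi\end{pmatrix},\quad \mu\in\mathbb B_m,\;\xi\in\mathbb C^p,
\]
is well-defined and extends by continuity to an isometry between the closed linear spans of the left-hand vectors in $\mathbb C^q\oplus(\mathbb C^m\otimes\mathcal L)$ and of the right-hand vectors in $\mathbb C^r\oplus\mathcal L$.

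Third, I would dilate $V$ to a contractive colligation (using Julia padding of $\mathcal L$ if needed to reach a unitary)
\[
U = \begin{pmatrix}D & C_1\\ B_1 & A_1\end{pmatrix}:\mathbb C^q\oplus(\mathbb C^m\otimes\mathcal L)\to\mathbb C^r\oplus\mathcal L,
\]
and form the transfer function
\[
C(\lambda) \;=\; D + C_1\,Z(\lambda)\bigl(I - A_1 Z(\lambda)\bigr)^{-1} B_1,
\]
where $Z(\lambda):\mathcal L\to\mathbb C^m\otimes\mathcal L$ is the Drury--Arveson evaluation map $v\mapsto \sum_i e_i\otimes \lambda_i v$. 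The realization theorem then guarantees $C\in\mathcal M^{q\times r}_{a_m}$, and unfolding the Neumann series together with the block relations encoded in $V$ produces $A(\lambda)C(\lambda)=B(\lambda)$ pointwise.

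The main obstacle is the dilation step. The isometry $V$ is only defined on a proper subspace of $\mathbb C^q\oplus(\mathbb C^m\otimes\mathcal L)$ and maps into a proper subspace of $\mathbb C^r\oplus\mathcal L$; extending it to a colligation compatible with the realization formula requires either Julia-type enlargement of $\mathcal L$ or a Parrott-style completion. Once the colligation is fixed, verifying $AC=B$ is a bookkeeping computation that uses the specific form $\Lambda(\mu)^*\xi = Z(\mu)^* F(\mu)^*\xi$ to tie the ball geometry of $\mathbb B_m$ to the transfer function.
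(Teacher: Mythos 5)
The paper does not actually prove this statement: it is quoted verbatim (as Leech's theorem) from \cite{AM}, Theorem 8.57, so there is no internal proof to compare yours against. (Incidentally, the $\nu$ in the displayed denominator is a typo for $\mu$.) Your proposal is the standard lurking-isometry/realization proof, which is also how the result is established in the literature, and it is correct in outline: the Kolmogorov factorization, the Gram identity
$A(\lambda)A(\mu)^*+\Lambda(\lambda)\Lambda(\mu)^*=B(\lambda)B(\mu)^*+F(\lambda)F(\mu)^*$,
and the resulting isometry $V$ are all right.

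Two corrections to the write-up. First, the ``main obstacle'' you identify is not one: you do not need a unitary colligation, hence no Julia padding or Parrott completion. Extending $V$ by zero on the orthogonal complement of its domain already gives a contraction $U$ on all of $\mathbb C^q\oplus(\mathbb C^m\otimes\mathcal L)$, and the Ball--Trent--Vinnikov realization theorem only requires a \emph{contractive} colligation to conclude that the transfer function is a Schur multiplier. Second, and more substantively, the colligation that realizes $C$ is $U^*$, not $U$. With $U=\bigl(\begin{smallmatrix}D_0&C_0\\B_0&A_0\end{smallmatrix}\bigr)$ the relations encoded in $V$ give $B(\mu)^*=\bigl(D_0+C_0\widetilde Z(\mu)(I-A_0\widetilde Z(\mu))^{-1}B_0\bigr)A(\mu)^*$, where $\widetilde Z(\mu)v=\sum_i e_i\otimes\overline{\mu_i}\,v$; this expression is anti-holomorphic in $\mu$ and maps $\mathbb C^q\to\mathbb C^r$, so it is $C(\mu)^*$ rather than $C(\mu)$. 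Only after taking adjoints, $C(\mu)=D_0^*+B_0^*(I-Z(\mu)A_0^*)^{-1}Z(\mu)C_0^*$ with $Z(\mu)=\widetilde Z(\mu)^*$ the row contraction $\mathbb C^m\otimes\mathcal L\to\mathcal L$, do you get a holomorphic $\mathbb C^{q\times r}$-valued function to which the realization theorem applies; as written, your transfer function has the wrong shape ($\mathbb C^{r\times q}$-valued) and the wrong analyticity. This is routine bookkeeping, as you say, but it is exactly the bookkeeping that makes the argument close, so it should be carried out explicitly.
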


\begin{lemma} Let $s$ be an element of $\mathcal M_a$ and let $\nu\in\mathbb B_m$. Then, $s(\nu)=0$ if and only if there exists a (in general not uniquely defined) $\mathcal M^{m \times 1}_{a_m}$ element $s_\nu$ such that
  \begin{equation}
    \label{inter-345}
s(\lambda)=b_\nu(\lambda)s_\nu(\lambda).
  \end{equation}
  \label{schur-ball}
\end{lemma}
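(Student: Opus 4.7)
The ``if'' direction is immediate: the Blaschke factor definition \eqref{bf} shows that $b_\nu(\nu)=0$, so any factorization $s=b_\nu s_\nu$ with $s_\nu$ finite at $\nu$ forces $s(\nu)=0$. For the converse, the plan is to invoke Leech's theorem (Theorem~\ref{leech-thm}) with $A(\lambda)=b_\nu(\lambda)$ regarded as a $\mathbb{C}^{1\times m}$-valued (Schur) multiplier and $B(\lambda)=s(\lambda)$ as the scalar Schur multiplier. Granting its hypothesis, the theorem delivers a $\mathbb{C}^{m\times 1}$-valued Schur multiplier $s_\nu$ with $s=b_\nu s_\nu$, which is precisely the conclusion of the lemma; non-uniqueness is built in, matching the statement.

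The actual work is to verify the positivity of the kernel
\[
K(\lambda,\mu)\;:=\;\frac{b_\nu(\lambda)b_\nu(\mu)^*-s(\lambda)\overline{s(\mu)}}{1-\langle\lambda,\mu\rangle}.
\]
Splitting the numerator and applying the identity \eqref{yui},
\[
K(\lambda,\mu)\;=\;k_s(\lambda,\mu)\;-\;\frac{1-\langle\nu,\nu\rangle}{(1-\langle\lambda,\nu\rangle)(1-\langle\nu,\mu\rangle)}.
\]
Under the hypothesis $s(\nu)=0$ one has $k_s(\lambda,\nu)=1/(1-\langle\lambda,\nu\rangle)$ and $k_s(\nu,\nu)=1/(1-\langle\nu,\nu\rangle)$, so the subtracted rank-one kernel is exactly $k_s(\lambda,\nu)\,\overline{k_s(\mu,\nu)}/k_s(\nu,\nu)$. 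By the standard RKHS fact, the resulting difference is the reproducing kernel of the orthogonal complement in $\mathcal{H}(k_s)$ of the line $\mathrm{span}\{k_s(\cdot,\nu)\}$, i.e.\ of the closed subspace of functions vanishing at $\nu$, and is therefore positive definite. Equivalently, positivity of $K$ is immediate from the previous proposition: the isometry $\tau$ there has one-dimensional range $\mathrm{span}\{k_s(\cdot,\nu)\}$, and $K$ is the reproducing kernel of the range of $I-\tau\tau^*$.

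The main obstacle, modest as it is, is recognizing the right way to align the Leech hypothesis with the rank-one kernel furnished by \eqref{yui}; everything else is formal. Continuity of $A=b_\nu$ and $B=s$ required in Theorem~\ref{leech-thm} is automatic since Schur multipliers of $a_m$ are analytic on $\mathbb{B}_m$. Once $K\succeq 0$ is in place, Leech's theorem produces the desired $s_\nu\in\mathcal{M}^{m\times 1}_{a_m}$ with $s=b_\nu s_\nu$, completing the proof.
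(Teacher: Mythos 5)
Your proof is correct and follows essentially the same route as the paper: the forward direction reduces to positivity of the kernel $\bigl(b_\nu(\lambda)b_\nu(\mu)^*-s(\lambda)\overline{s(\mu)}\bigr)/\bigl(1-\langle\lambda,\mu\rangle\bigr)$ followed by an application of Leech's theorem (Theorem \ref{leech-thm}) with $A=b_\nu$ and $B=s$, and the reverse direction rests on $b_\nu(\nu)=0$. The only difference is presentational: where the paper derives that positivity through the isometry $\tau$ and the $\Theta_\nu$, $J$ algebra (Steps 1--2), you identify the subtracted term, via \eqref{yui} and $s(\nu)=0$, as the rank-one kernel $k_s(\lambda,\nu)\overline{k_s(\mu,\nu)}/k_s(\nu,\nu)$ and invoke the standard orthocomplement fact --- the same one-dimensional projection argument in slightly more direct form.
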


\begin{proof} We divide the proof into steps. In the first three steps we assume that $s(\nu)=0$. The fourth step considers the converse statement.\\

  STEP 1: {\sl The kernel
    \[
          \frac{\begin{pmatrix}1&-s(\lambda)\end{pmatrix}\Theta_\nu(\lambda) J \Theta_\nu(\mu)^*\begin{pmatrix}1\\ \\ -\overline{s(\mu)}\end{pmatrix}}{1-\langle
            \lambda, \mu\rangle}
        \] is a positive kernel.
      }

      Since $I-\tau\tau^*\ge0$, the kernel $\langle (I-\tau\tau^*)k_s(\cdot,\mu),k_s(\cdot,\lambda\rangle_{\mathcal H(k_s)}$ is positive on $\mathbb B_m$. But, using \eqref{adjoint} we have
      \[
        \begin{split}
          \langle(\tau\tau^*)k_s(\cdot,\mu),k_s(\cdot,\lambda\rangle_{\mathcal H(k_s)}&=
\langle(\tau^*(k_s(\cdot,\mu)),\tau^*(k_s(\cdot,\lambda)\rangle_{\mathcal H(k_s)}\\
&=\left\langle K_{\Theta_\nu}(\cdot,\mu)\begin{pmatrix}1\\ \\ -\overline{s(\mu)}\end{pmatrix},K_{\Theta_\nu}(\cdot,\lambda)\begin{pmatrix}1\\ \\ -\overline{s(\lambda)}\end{pmatrix}\right\rangle_{\mathcal H(\Theta_\nu)}\\
&  =        \begin{pmatrix}1, &-s(\lambda)\end{pmatrix}K_{\Theta_\nu}(\lambda,\mu)\begin{pmatrix}1\\ \\ -\overline{s(\mu)}\end{pmatrix}\\
&=\frac{ \begin{pmatrix}1, &-s(\lambda)\end{pmatrix}(J-\Theta_\nu(\lambda)J \Theta_\nu(\mu)^*)\begin{pmatrix}1\\ \\ -\overline{s(\mu)}\end{pmatrix}}{1-\langle \lambda,\mu\rangle}\\
&=\frac{1-s(\lambda)\overline{s(\mu)}-\begin{pmatrix}1&-s(\lambda)\end{pmatrix}\Theta_\nu(\lambda)J \Theta_\nu(\mu)^*)\begin{pmatrix}1\\ \\ -\overline{s(\mu)}\end{pmatrix}}{1-\langle \lambda,\mu\rangle}\\
&=k_s(\lambda,\mu)-\frac{\begin{pmatrix}1&-s(\lambda)\end{pmatrix}\Theta_\nu(\lambda)J \Theta_\nu(\mu)^*)\begin{pmatrix}1\\ \\ -\overline{s(\mu)}\end{pmatrix}}{1-\langle \lambda,\mu\rangle}.
                    \end{split}
                  \]
                  Hence,
  \begin{equation}
    \begin{split}
      \langle (I-\tau\tau^*)k_s(\cdot,\mu),k_s(\cdot,\lambda\rangle_{\mathcal H(k_s)}
      &=
    \frac{\begin{pmatrix}1,&-s(\lambda)\end{pmatrix}\Theta_\nu(\lambda) J \Theta_\nu(\mu)^*\begin{pmatrix}1\\ \\-\overline{s(\mu)}\end{pmatrix}}{1-\langle
      \lambda, \mu\rangle}\\
    \end{split}
    \label{aa*-bb*}
    \end{equation}
    is a positive kernel.

    STEP 2: {\sl The kernel \eqref{aa*-bb*} can be rewritten as
      \begin{equation}
        \label{kernelbb*}
\frac{b_\nu(\lambda)b_\nu(\mu)^*-s(\lambda)\overline{s(\mu)}}{1-\langle
  \lambda, \mu\rangle}.
\end{equation}

}

We have
\[
\begin{pmatrix}1,&-s(\lambda)\end{pmatrix}\Theta_\nu(\lambda) = \begin{pmatrix}1&-s(\lambda)\end{pmatrix}
\begin{pmatrix}b_\nu(\lambda)&0\\
0_{1\times m}&1\end{pmatrix}=\left(
  \begin{matrix}\underbrace{b_\nu(\lambda)}_{1\times m}&-s(\lambda)\end{matrix}\right)
\]
and hence
\[
  \begin{split}
    \begin{pmatrix}1,&-s(\lambda)\end{pmatrix}\Theta_\nu(\lambda) J \Theta_\nu(\mu)^*\begin{pmatrix}1\\ \\-\overline{s(\mu)}\end{pmatrix}&=
\begin{pmatrix}b_\nu(\lambda)&-s(\lambda)\end{pmatrix}J\begin{pmatrix}\overline{b_\nu(\mu)}\\-\overline{s(\mu)}\end{pmatrix}
\\&=b_\nu(\lambda)b_\nu(\mu)^*-s(\lambda)\overline{s(\mu)}
  \end{split}
  \]
and hence the result.\\

STEP 3: {\sl There exists a $\mathbb C^{m\times 1}$-valued multiplier $s_\nu$ such that $b_\nu s_\nu=s$.}\\

It suffices to apply Theorem  \ref{leech-thm} to the positive kernel \eqref{kernelbb*}.\\

STEP 4: {\sl Any $s$ of the form \eqref{inter-345} is a Schur multiplier vanishing at $\nu$.}\\

It suffices to write
\[
  \begin{split}
    \frac{1-s(\lambda)\overline{s(\mu)}}{1-\langle\lambda,\mu\rangle}&=\frac{1-b_\nu(\lambda)b_\nu(\mu)^*}{1-\langle\lambda,\mu\rangle}+b_\nu(\lambda)\left(\frac{1-s_\nu(\lambda)s_\nu(\mu)^*}{1-\langle\lambda,\mu\rangle}\right)b_\nu(\mu)^*\\
  &=\frac{1-\langle\nu,\nu\rangle}{(1-\langle\lambda,\nu\rangle)(1-\langle \nu,\lambda\rangle)}+b_\nu(\lambda)\left(\frac{1-s_\nu(\lambda)s_\nu(\mu)^*}{1-\langle\lambda,\mu\rangle}\right)b_\nu(\mu)^*,
  \end{split}
\]
where we have used \eqref{yui}, and the above expresses $k_s$ as a sum of two positive kernels.
\end{proof}

{\em We are now ready to complete the proof of Theorem \ref{Maintheorem}.}  First note that any multiplier of $\mathcal H(k)$ is of the form $S(z)=s(\beta(z))$, where $s$ is a (in general not uniquely defined) multiplier of the Arveson space. This is just part 3 of Theorem 3.1 on page 105 of \cite{btv}. Next, Lemma \ref{schur-ball} above tells us that if $S(w_0)=0$ for some $w_0\in\Omega$, then there exists a (in general not uniquely defined) $s_1$ in $\mathcal M_a^{m \times 1}$  such that
  \begin{equation}
    \label{inter-un-point}
  s(\lambda)=b_{\beta(w_0)}(\lambda)s_1(\lambda).
  \end{equation}

Now, for the rest of the proof, we note that from \eqref{inter-un-point} we have
\[
S(z)=s(\beta(z))=b_{\beta(w_0)}(\beta(z))s_1(\beta(z)).
\]
To conclude we show that  $S_1(z)=s_1(\beta(z))$ is in $\mathcal M_{k_\beta}^{m,1}$. Let $k_\beta$ be defined as 
 \begin{equation*}
	K_\beta(z,w)=\frac{1}{1-\langle\beta(z),\beta(w)\rangle_{\mathbb C^N}},\quad z,w\in\Omega.
\end{equation*}. We have
\[
\frac{I_m-S_1(z)(S_1(w))^*}{1-\langle\beta(z),\beta(w)\rangle}=\frac{I_m-s_1(\beta(z))(s_1(\beta(w)))^*}{1-\langle\beta(z),\beta(w)\rangle}
  \]
which is positive definite on $\Omega$ since $s_1$ is in $\mathcal M_a^{m \times 1}$ and so the kernel
\[
\frac{I_m-s_1(\lambda)s_1(\mu)^*}{1-\langle\lambda,\mu\rangle},
\]
is positive definite in $\mathbb B_m$ and in particular on $\beta(\Omega)\subset\mathbb B_m$.

\begin{remark} If $\cX$ is a Banach space of analytic functions on a domain $\Omega\subset \mathbb{C}^d$, then one says that one can solve {\em Gleason problem} for  $\cX$ if, whenever $f\in\cX$ and $\lambda\in\Omega$, there exist functions $f_1, \dots, f_d \in \cX$ such that $ f(z)- f(\lambda) = \sum_{i=1}^{d} (z_i-\lambda_i) f_i(z)$. See \cite{rudin-ball} for more details on this. The Gleason problem is solvable in the Multiplier algebra of the Drury-Arveson space \cite{Gleason-Richter-Sundberg}. Later, Hartz in \cite{Hartz} solved the Gleason problem for the Multiplier algebra of complete Nevanlinna-Pick kernels. However, it should be noted that the solution is not established in the unit ball of the multiplier algebra if we start the function from the unit ball of the multiplier algebra.
	
It is connected to the Schwarz lemma in the context of scalar-valued functions. Notably, the Gleason problem, where the function vanishes at the point $0$ in the unit ball of the multiplier algebra of the Drury-Arveson space, is closely linked to  Schwarz Lemma. Specifically, Schwarz Lemma implies the solvability of the Gleason problem, and vice versa. At the point $0$ in $\mathbb{B}_d$, the Gleason problem is solvable in the unit ball of the multiplier algebra of the Drury-Arveson space, as indicated by Corollary 4.2 in \cite{Gleason-Richter-Sundberg}. While the theorem may not be explicitly formulated in this manner, the proofs therein substantiate this connection. Hence we have Schwarz Lemma at this point.
\end{remark}
\section{A Schwarz lemma at a point where the function need not vanish}
\setcounter{equation}{0}
\label{sec5}
We now present  the tangential version of the previous result.

 \begin{theorem} Let $s$ be a $\mathbb C^{p\times q}$-valued Schur multiplier and let $\nu\in\mathbb B_N$. Let $\xi\in\mathbb C^{p}$ and $\eta\in
            \mathbb C^{q}$ and assume $\eta^*\eta<\xi^*\xi$. Then $\xi^*s(\nu)=\eta^*$ if and only if there exists a (in general not uniquely defined)
            $\mathbb C^{(p-1+N)\times q}$-valued Schur multiplier $s_\nu$ such that
  \begin{equation}
    \label{inter-3456}
    \begin{split}
    s(\lambda)&=\left(\begin{pmatrix}\frac{\xi b_\nu(\lambda)}{\sqrt{c^*J_{p,q}c}}&\begin{pmatrix}U_1\\U_3\end{pmatrix}\end{pmatrix}
    s_\nu(\lambda)+\frac{\xi\eta^*}{c^*J_{p,q}c}\left(I_q+\frac{\eta\eta^*}{c^*J_{p,q}c}\right)^{-1/2}\right)\times\\
    &\hspace{5mm}\times\left(\begin{pmatrix}\frac{\eta b_\nu(\lambda)}{\sqrt{c^*J_{p,q}c}}& 0\end{pmatrix}
      s_\nu(\lambda)+\left(I_q+\frac{\eta\eta^*}{c^*J_{p,q}c}\right)^{1/2}\right)^{-1},
      \end{split}
\end{equation}
\label{schur-arveson}
\end{theorem}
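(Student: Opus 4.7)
The plan is to imitate the proof of Lemma~\ref{schur-ball}, replacing the scalar vanishing condition $s(\nu)=0$ by the tangential identity $\xi^*s(\nu)=\eta^*$. That proof used three ingredients: a one-dimensional reproducing kernel Hilbert space embedded in $\mathcal{A}^{1,1}$, associated with a Potapov elementary factor $\Theta_\nu$; an isometry $\tau$ from this space into $\mathcal{H}(k_s)$ whose existence is equivalent to the interpolation condition; and a final application of Leech's Theorem~\ref{leech-thm}. I will build the tangential analogues of these three ingredients and then extract \eqref{inter-3456} by inverting the resulting factorization.

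\textbf{The Potapov factor.} Set $c=\begin{pmatrix}\xi\\ \eta\end{pmatrix}\in\mathbb{C}^{p+q}$ and $J_{p,q}=\mathrm{diag}(I_p,-I_q)$; the hypothesis $\eta^*\eta<\xi^*\xi$ is exactly $c^*J_{p,q}c>0$. Build a $J_{p,q}$-inner function $\Theta_\nu$ on $\mathbb{B}_N$ whose reproducing kernel
\[
K_{\Theta_\nu}(\lambda,\mu)=\frac{J_{p,q}-\Theta_\nu(\lambda)J_{p,q}\Theta_\nu(\mu)^*}{1-\langle\lambda,\mu\rangle}
\]
corresponds to the one-dimensional space spanned by
\[
f_\nu(\lambda)=\frac{1}{\sqrt{c^*J_{p,q}c}}\,\frac{c}{1-\langle\lambda,\nu\rangle}.
\]
Concretely, the ``moving'' columns of $\Theta_\nu$ contain the entries $\xi b_\nu(\lambda)/\sqrt{c^*J_{p,q}c}$ and $\eta b_\nu(\lambda)/\sqrt{c^*J_{p,q}c}$ that appear in \eqref{inter-3456}, while the remaining $p-1$ constant ``direction'' columns are produced by completing $\xi/\|\xi\|$ to a unitary basis of $\mathbb{C}^p$; the blocks $U_1$ and $U_3$ in the theorem statement are the two sub-blocks of this unitary completion. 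The constant blocks $(I_q+\eta\eta^*/(c^*J_{p,q}c))^{\pm 1/2}$ and $\xi\eta^*/(c^*J_{p,q}c)$ arise from normalizing $\Theta_\nu$ to be exactly $J_{p,q}$-unitary on the Shilov boundary.

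\textbf{Isometry and Leech.} Define $\tau$ on the span of $f_\nu$ by $(\tau f)(\lambda)=\begin{pmatrix}I_p & -s(\lambda)\end{pmatrix}f(\lambda)$, a map into the $\mathbb{C}^p$-valued reproducing kernel Hilbert space $\mathcal{H}(k_s)$. Mirroring Step~1 of Lemma~\ref{schur-ball}, the image $\tau f_\nu$ equals (up to the normalization $1/\sqrt{c^*J_{p,q}c}$) the reproducing element at $\nu$ applied to $\xi$, and the isometry property holds \emph{precisely because} $\xi^*s(\nu)=\eta^*$. From $I-\tau\tau^*\ge 0$ one obtains, following the identical algebraic chain used in Steps~1--2 there, positivity of a kernel of the form $(A(\lambda)A(\mu)^*-B(\lambda)B(\mu)^*)/(1-\langle\lambda,\mu\rangle)$, where
\[
A(\lambda)=\begin{pmatrix}\dfrac{\xi b_\nu(\lambda)}{\sqrt{c^*J_{p,q}c}} & \begin{pmatrix}U_1\\ U_3\end{pmatrix}\end{pmatrix}
\]
is the $p\times(p-1+N)$ block coefficient from the first row of \eqref{inter-3456}, and $B(\lambda)$ is the corresponding $q\times(p-1+N)$ block extracted from the second row after the $s(\lambda)$-factors have been grouped on one side. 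Theorem~\ref{leech-thm} then produces a $\mathbb{C}^{(p-1+N)\times q}$-valued Schur multiplier $s_\nu$ with $B=A s_\nu$.

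\textbf{Extracting the LFT and the converse; expected obstacle.} Reinstating the constant (shift) terms turns $B=As_\nu$ into
\[
\text{(numerator of \eqref{inter-3456})} = s(\lambda)\cdot\text{(denominator of \eqref{inter-3456})},
\]
so solving for $s(\lambda)$ yields \eqref{inter-3456} as soon as the denominator block is pointwise invertible on $\mathbb{B}_N$. I expect the verification of this invertibility to be the \emph{main obstacle}: the strict inequality $\eta^*\eta<\xi^*\xi$ makes $(I_q+\eta\eta^*/(c^*J_{p,q}c))^{1/2}$ strictly positive definite, and one must dominate the $\lambda$-dependent perturbation $\begin{pmatrix}\eta b_\nu(\lambda)/\sqrt{c^*J_{p,q}c} & 0\end{pmatrix}s_\nu(\lambda)$ uniformly, which is naturally handled by the $J_{p,q}$-contractivity of $\Theta_\nu$ combined with $\|s_\nu(\lambda)\|\le 1$ and $\|b_\nu(\lambda)\|\le 1$ on $\mathbb{B}_N$. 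For the converse, substitute \eqref{inter-3456} into $(I_p-s(\lambda)s(\mu)^*)/(1-\langle\lambda,\mu\rangle)$, invoke the identity \eqref{yui} together with the multiplier property of $s_\nu$, and verify a decomposition into a sum of two positive kernels, in exact parallel with Step~4 of Lemma~\ref{schur-ball}; evaluating at $\lambda=\nu$ then returns $\xi^*s(\nu)=\eta^*$.
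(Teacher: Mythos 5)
Your proposal follows essentially the same route as the paper: the one-dimensional $J_{p,q}$-space spanned by $c/(1-\langle\lambda,\nu\rangle)$, the factorization of $J_{p,q}-cc^*/(c^*J_{p,q}c)$ via the unitary completion of $\xi/\sqrt{\xi^*\xi}$ (producing the blocks $U_1$, $U_3$ and the normalizations $(I_q+\eta\eta^*/(c^*J_{p,q}c))^{\pm 1/2}$), the isometry $f\mapsto(I_p,\,-s)f$ into $\mathcal H(k_s)$, Leech's theorem for the direct implication, and evaluation at $\nu$ using $b_\nu(\nu)=0$ and $\xi^*\bigl(\begin{smallmatrix}U_1\\U_3\end{smallmatrix}\bigr)=0$ for the converse. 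This matches the paper's argument in both structure and detail.
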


\begin{proof} We start the proof with a $p\times p$ unitary matrix $U = \textmatrix U_1&U_2\\U_3&U_4\\$ where $U_1$ is a ${(p-1)\times(p-1)}$ matrix  and 
\[
\begin{pmatrix}U_2\\U_4\end{pmatrix}=\frac{\xi}{\sqrt{\xi^*\xi}}, \text{ or in other words, }
U\begin{pmatrix}0\\0\\ \vdots\\ 1\end{pmatrix}=\frac{\xi}{\sqrt{\xi^*\xi}}.
 \]
Because $U$ is unitary, the last column is orthogonal to the first $p-1$ columns, i.e.
  \begin{equation}
    \xi^*\begin{pmatrix}U_1\\U_3\end{pmatrix}=0.
    \label{rtyuiop}
    \end{equation}

We set
\[
  J_{p,q}=\begin{pmatrix}I_p&0\\0&-I_q\end{pmatrix},\quad c=\begin{pmatrix}\xi \\ \eta\end{pmatrix},\quad {\rm and}\quad
f_\nu(\lambda)=\frac{\begin{pmatrix}\xi\\ \eta\end{pmatrix}}{1-\langle \lambda,\nu\rangle}\in\mathcal A^{p+q}_{J_{p,q}}.
\]
It holds that
\[
  \|f_\nu\|^2_{\mathcal A^{p+q}_{J_{p,q}}}=\frac{c^*J_{p,q}c}{1-\langle \nu,\nu\rangle}=\frac{\xi^*\xi-\eta^*\eta}{1-\langle \nu,\nu\rangle}.
\]
We can write
\begin{equation}
  \label{jjstar}
  \begin{split}
    \frac{f_\nu(\lambda)f_\nu(\mu)^*}{  \|f_\nu\|^2_{\mathcal A^{p+q}_{J_{p,q}}}}&=\frac{1}{c^*Jc}c\left(\frac{1-\langle\nu,\nu\rangle}{(1-\langle \lambda,\nu\rangle)(1-\langle \mu,\nu\rangle)}\right)c^*\\
    &=\frac{1}{c^*Jc}c\left(\frac{1-b_\nu(\lambda)b_\nu(\mu)^*}{1-\langle\lambda,\mu\rangle}\right)c^*\\
    &=\frac{J_{p,q}-\left(J_{p,q}-\frac{cc^*}{c^*Jc}+\frac{cb_\nu(\lambda)b_\nu(\mu)^*c^*}{c^*Jc}\right)}{1-\langle\lambda,\mu\rangle}.
  \end{split}
\end{equation}
To continue, we need to find the signature and a diagonalization of $J_{p,q}-\frac{cc^*}{c^*Jc}$. In the following lemmas which are parts of the proof of Theorem \ref{schur-arveson}, $\xi \in \mathbb C^p$ and $\eta \in \mathbb C^q$ satisfy $\eta^*\eta < \xi^*\xi$. Let $U$ be as defined theorem above.
\begin{lemma}
Let $\alpha$ be the $(p+q)\times (p+N-1+q)$ matrix
\begin{equation*}
  \alpha=\begin{pmatrix}\begin{pmatrix}U_1\\U_3\end{pmatrix}&\frac{\xi\eta^*}{c^*Jc}(I_q+\frac{\eta\eta^*}{c^*Jc})^{-1/2}\\
0&
    (I_q+\frac{\eta\eta^*}{c^*Jc})^{1/2}\end{pmatrix}.
  \end{equation*}
  Then   \begin{equation}
J_{p,q}-\frac{cc^*}{c^*Jc}=\alpha J_{p+N-1,q}\alpha^*
  \label{JC*C}
  \end{equation}
\end{lemma}

\begin{proof}
Let
$$ A = I_p-\frac{\xi\xi^*}{c^*Jc}, \; B = -\frac{\xi\eta^*}{c^*Jc},  \text{ and } D = -I_q-\frac{\eta\eta^*}{c^*Jc}.$$
Then
\[
  \begin{split}
    A-BD^{-1}B^*&=I_p-\frac{\xi\xi^*}{c^*Jc}+\frac{\xi\eta^*\left(I_q+\frac{\eta\eta^*}{c^*Jc}\right)^{-1}\eta\xi^*}{(c^*Jc)^2}\\
    &=I_p-\frac{1}{c^*Jc}\xi\underbrace{\left(1-\eta^*\left(I_q+\frac{\eta\eta^*}{c^*Jc}\right)^{-1}\eta \frac{1}{c^*Jc}\right)}_{=\left(\frac{\xi^*\xi}{c^*Jc}\right)^{-1}}\xi^*=I_p-\frac{\xi\xi^*}{\xi^*\xi}.
    \end{split}
  \]
Thus,
\[
  \begin{split}
    J_{pq}-\frac{cc^*}{c^*Jc}
    &=\begin{pmatrix}I_p-\frac{\xi\xi^*}{c^*Jc}&-\frac{\xi\eta^*}{c^*Jc}\\
    -\frac{\eta\xi^*}{c^*Jc}&-I_q-\frac{\eta\eta^*}{c^*Jc}\end{pmatrix}\\
  &{=}\begin{pmatrix} A&B\\B^*&D\end{pmatrix}\\
  &=\begin{pmatrix}I_p&BD^{-1}\\0&I_q\end{pmatrix}
\begin{pmatrix}A-BD^{-1}B^*&0\\0&D\end{pmatrix}
  \begin{pmatrix}I_p&BD^{-1}\\0&I_q\end{pmatrix}^*\\
      &=\begin{pmatrix}I_p&\frac{\xi\eta^*}{c^*Jc}(I_q+\frac{\eta\eta^*}{c^*Jc})^{-1}\\0&I_q\end{pmatrix}\times\\
      &\hspace{5mm}\times
        \begin{pmatrix}I_p-\frac{\xi\xi^*}{\xi^*\xi}&0\\0&-\left(I_q+\frac{\eta\eta^*}{c^*Jc}\right)\end{pmatrix}
        \begin{pmatrix}I_p&\frac{\xi\eta^*}{c^*Jc}(I_q+\frac{\eta\eta^*}{c^*Jc})^{-1}\\0&I_q\end{pmatrix}^*.
   \end{split}
    \]
    To conclude, note that 
    \[
    \begin{split}
      I_p-\frac{\xi\xi^*}{\xi^*\xi}&=U\left(I_p-\begin{pmatrix}0\\0\\ \vdots\\ 1\end{pmatrix}
      \begin{pmatrix}0\\0\\ \vdots\\ 1\end{pmatrix}^*\right)U^* =U\begin{pmatrix}I_{p-1}&0_{(p-1)\times 1}\\       0_{1\times (p-1)}&0\end{pmatrix}U^*=\underbrace{\begin{pmatrix}U_1\\U_3\end{pmatrix}}_{\in\mathbb C^{p\times (p-1)}}\begin{pmatrix}U_1\\U_3\end{pmatrix}^*.
        \end{split}
    \]
\end{proof}

\begin{lemma} The equality
    \begin{equation*}
        \frac{f_\nu(\lambda)f_\nu(\mu)^*}{  \|f_\nu\|^2_{\mathcal A^{p+q}_{J_{p,q}}}}=\frac{J-\Theta_\nu(\lambda)\widetilde{J}\Theta_\nu(\mu)^*}{1-\langle
          \lambda,\mu\rangle}
      \end{equation*}
      holds with
      \begin{equation}\label{qazxc}
        \Theta_\nu(\lambda)=          \begin{pmatrix}\frac{1}{\sqrt{c^*Jc}}cb_\nu(\lambda)&\alpha            \end{pmatrix}
      \end{equation}
  and
\[
      \widetilde{J}=\begin{pmatrix}I_N&0\\ 0&J_{p-1,q}\end{pmatrix}=\begin{pmatrix}I_{N+p-1}&0\\ 0&-I_q\end{pmatrix}=J_{p-1+N,q}.
      \]
\end{lemma}

\begin{proof}
From \eqref{jjstar} we have:
\begin{equation*}
  \begin{split}
    \frac{f_\nu(\lambda)f_\nu(\mu)^*}{  \|f_\nu\|^2_{\mathcal A^{p+q}_{J_{p,q}}}}&=
    \frac{J-\left(J-\frac{cc^*}{c^*Jc}+\frac{cb_\nu(\lambda)b_\nu(\mu)^*c^*}{c^*Jc}\right)}{1-\langle\lambda,\mu\rangle}\\
    &=   \frac{J_{p,q}-\alpha I_{p-1}\alpha^*-\frac{cb_\nu(\lambda)b_\nu(\mu)^*c^*}{c^*Jc}}{1-\langle\lambda,\mu\rangle}\\
         &=   \frac{J_{p,q}-\Theta_\nu (\lambda)J_{p-1+N,q}\Theta_\nu(\mu)^*}{1-\langle\lambda,\mu\rangle},
  \end{split}
\end{equation*}
with $\Theta_\nu$ given by \eqref{qazxc}.

  \end{proof}
      \begin{lemma} It holds that
        \begin{equation*}
          \label{poiuy}
          c^*J\Theta_\nu(\nu)=0
        \end{equation*}
        \end{lemma}
      \begin{proof} Recall that $\alpha$ has been defined in \eqref{JC*C}. Since $b_\nu(\nu)=0_{1\times N}$ we need to show that $ c^*J_{p,q}\alpha=0_{1\times (p+q)}$.
        We have
          \[
            \begin{split}
              c^*J_{p,q}\alpha &=\begin{pmatrix}\xi^*&-\eta^*\end{pmatrix}
              \begin{pmatrix}\begin{pmatrix}U_1\\U_3\end{pmatrix}&\frac{\xi\eta^*}{c^*J_{p,q}c}(I_q+\frac{\eta\eta^*}{c^*J_{p,q}c})^{-1/2}\\
0&
(I_q+\frac{\eta\eta^*}{c^*J_{p,q}c})^{1/2}\end{pmatrix}\\
                          &=\begin{pmatrix}\xi^*\begin{pmatrix}U_1\\U_3\end{pmatrix}&\left(\frac{\xi^*\xi\eta^*}{c^*J_{p,q}c}-\eta^*(I_q+\frac{\eta\eta^*}{c^*J_{p,q}c})\right)
              (I_q+\frac{\eta\eta^*}{c^*J_{p,q}c})^{-1/2}
              \end{pmatrix}.
                            \end{split}
            \]
            The first block is equal to $0$ by \eqref{rtyuiop} and the second by direct computation:
            \[
              \frac{\xi^*\xi\eta^*}{c^*J_{p,q}c}-\eta^*\left(I_q+\frac{\eta\eta^*}{c^*J_{p,q}c}\right)=\frac{\xi^*\xi-c^*J_{p,q}c-\eta^*\eta}{c^*J_{p,q}c}\eta^*=0.
\]
      \end{proof}

The proof of Theorem \ref{schur-arveson} is nearly complete. If $s$ answers the question, the map
  \[
f\mapsto \begin{pmatrix}I_p,&-s\end{pmatrix}f
  \]
  sends $\mathcal H(\Theta_\nu)$ isometrically into $\mathcal H(S)$ and the linear fractional transformation follows from Leech theorem as in the proof of
  Lemma \ref{schur-ball}. Conversely, let $s_\nu$ be as in the theorem. Then, by \eqref{inter-3456} and using \eqref{rtyuiop}

  \[
  \begin{split}
    \xi^*s(\nu)&=\xi^*\frac{\xi\eta^*}{c^*J_{p,q}c}
    \left(I_q+\frac{\eta\eta^*}{c^*J_{p,q}c}\right)^{-1/2}\left(I_q+\frac{\eta\eta^*}{c^*J_{p,q}c}\right)^{-1/2}\\
    &=\frac{1}{c^*J_{p,q}c}
    \xi^*\xi\eta^*\left(I_q+\frac{\eta\eta^*}{c^*J_{p,q}c}\right)^{-1}\\
    &   =\xi^*\xi\left(1+\frac{\eta^*\eta}{c^*J_{p,q}c}\right)^{-1}\eta^*\\
    &=\frac{1}{c^*J_{p,q}c}\xi^*\xi\left(\frac{\xi^*\xi}{c^*J_{p,q}c}\right)^{-1}\eta^* =\eta^*
    \end{split}
\]

\end{proof}

\begin{remark}
  When $N=1$ and $p=q=1$ (scalar case for the open unit disc), with $\xi=1$ and $\eta=\overline{s(a)}$, set $\lambda=z\in\mathbb D$ and $s_\nu=\sigma$. We then have
  \[
b_a(z)=\frac{z-a}{1-z\overline{a}},\quad c^*J_{p,q}c=1-|s(a)|^2,\quad \left(I_q+\frac{\eta\eta^*}{c^*J_{p,q}c}\right)^{-1/2}=\sqrt{1-|s(a)|^2}.
\]
Furthermore, $U_1$ and $U_2$ do not appear in \eqref{inter-3456}, and the latter
formula reduces to the well-known formula \eqref{schur-one-point}
\begin{equation}
  \label{schur-one-point-1}
s(z)=\frac{b_a(z)\sigma(z)+s(a)}{1+b_a(z)\sigma(z)}
\end{equation}
where $\sigma$ varies among the family of all Schur functions.
\end{remark}

\section{Poincar\'e contractivity for $m>1$}
\setcounter{equation}{0}
The counterpart of Poincar\'e contractivity is now:
\begin{theorem}
  \label{11-August-2023-3}
  Let $s$ be a $\mathbb C^{p\times q}$-valued Schur multiplier and let $\nu\in\mathbb B_N$. Let $\xi\in\mathbb C^{p}$ and $\eta\in
            \mathbb C^{q}$ be such that $\xi^*s(\nu)=\eta^*$. Then
\begin{equation}
  \label{point}
    \left\|(\xi^*s(\lambda)-\eta^*)\left(I_q+\frac{\eta\eta^*}{c^*J_{p,q}c}\right)^{1/2}\right\|\le
   \frac{|\xi^*\xi-\xi^*s(\lambda)\eta|}{\sqrt{c^*J_{p,q}c}}\cdot\|b_\nu(\lambda)\|.
  \end{equation}
\end{theorem}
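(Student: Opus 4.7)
The strategy is to exploit the isometric inclusion $\tau\colon\mathcal H(\Theta_\nu)\hookrightarrow \mathcal H(s)^{p}$, $f\mapsto\begin{pmatrix}I_p & -s\end{pmatrix}f$, that underlies the proof of Theorem~\ref{schur-arveson}. From $I-\tau\tau^{*}\ge 0$, the same calculation as in STEP~1 of Lemma~\ref{schur-ball}, transposed to $\mathbb C^{p\times p}$-valued reproducing kernels, shows that
\begin{equation*}
\frac{\begin{pmatrix}I_p & -s(\lambda)\end{pmatrix}\Theta_\nu(\lambda)\,J_{p-1+N,q}\,\Theta_\nu(\mu)^{*}\begin{pmatrix}I_p\\ -s(\mu)^{*}\end{pmatrix}}{1-\langle\lambda,\mu\rangle}
\end{equation*}
is a positive $\mathbb C^{p\times p}$-valued kernel on $\mathbb B_N$. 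Specialising to $\mu=\lambda$, sandwiching with $\xi^{*}$ on the left and $\xi$ on the right, and using $1-\|\lambda\|^{2}>0$ reduces Theorem~\ref{11-August-2023-3} to the single scalar inequality that one obtains once $\xi^{*}\begin{pmatrix}I_p & -s(\lambda)\end{pmatrix}\Theta_\nu(\lambda)$ is computed in closed form.

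The heart of the argument is this computation, based on the block decomposition \eqref{qazxc} of $\Theta_\nu$. Three blocks appear. The first (width $N$) is immediately $\tfrac{(\xi^{*}\xi-\xi^{*}s(\lambda)\eta)\,b_\nu(\lambda)}{\sqrt{c^{*}J_{p,q}c}}$. The middle block (width $p-1$), coming from the $\begin{pmatrix}U_1\\U_3\end{pmatrix}$ entry of $\alpha$, vanishes identically thanks to \eqref{rtyuiop}. The last block (width $q$) equals
\begin{equation*}
\tfrac{\xi^{*}\xi\,\eta^{*}}{c^{*}J_{p,q}c}\Bigl(I_q+\tfrac{\eta\eta^{*}}{c^{*}J_{p,q}c}\Bigr)^{-1/2}-\xi^{*}s(\lambda)\Bigl(I_q+\tfrac{\eta\eta^{*}}{c^{*}J_{p,q}c}\Bigr)^{1/2},
\end{equation*}
which collapses to $-(\xi^{*}s(\lambda)-\eta^{*})\bigl(I_q+\tfrac{\eta\eta^{*}}{c^{*}J_{p,q}c}\bigr)^{1/2}$ by the eigenvector identity $\bigl(I_q+\tfrac{\eta\eta^{*}}{c^{*}J_{p,q}c}\bigr)\eta=\tfrac{\xi^{*}\xi}{c^{*}J_{p,q}c}\eta$, which is precisely the identity already used in the closing lines of the proof of Theorem~\ref{schur-arveson} to verify $\xi^{*}s(\nu)=\eta^{*}$.

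It then remains only to sandwich the resulting row with $J_{p-1+N,q}$. The first $N$ (positive) coordinates contribute $\tfrac{|\xi^{*}\xi-\xi^{*}s(\lambda)\eta|^{2}\|b_\nu(\lambda)\|^{2}}{c^{*}J_{p,q}c}$, the middle $p-1$ (positive) coordinates contribute $0$, and the last $q$ (negative) coordinates contribute $-\bigl\|(\xi^{*}s(\lambda)-\eta^{*})\bigl(I_q+\tfrac{\eta\eta^{*}}{c^{*}J_{p,q}c}\bigr)^{1/2}\bigr\|^{2}$. Positivity of the sum gives the squared form of \eqref{point}, and a square root finishes the proof. The only real obstacle is bookkeeping the block sizes of $\Theta_\nu$ together with the signature of $J_{p-1+N,q}$; every algebraic simplification reduces to identities already appearing in Theorem~\ref{schur-arveson}.
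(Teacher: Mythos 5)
Your argument is correct, and it reaches \eqref{point} by a genuinely shorter route than the paper's. The paper proves Theorem \ref{11-August-2023-3} by passing through the full conclusion of Theorem \ref{schur-arveson}: writing $\Theta_\nu=\left(\begin{smallmatrix}A&B\\C&D\end{smallmatrix}\right)$ and $s=(As_\nu+B)(Cs_\nu+D)^{-1}$, computing $\xi^*A-\eta^*C=\left(\sqrt{c^*J_{p,q}c}\,b_\nu(\lambda)\ \ 0\right)$ and $\xi^*B-\eta^*D=0$, deducing the identity $\alpha(\lambda)s_\nu(\lambda)=\beta(\lambda)$ with $\alpha(\lambda)=\frac{\xi^*\xi-\xi^*s(\lambda)\eta}{\sqrt{c^*J_{p,q}c}}\left(b_\nu(\lambda)\ \ 0\right)$ and $\beta(\lambda)=(\xi^*s(\lambda)-\eta^*)\bigl(I_q+\frac{\eta\eta^*}{c^*J_{p,q}c}\bigr)^{1/2}$, and finishing with the pointwise contractivity of the free parameter $s_\nu$. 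You instead stop at the kernel-positivity stage: you never invoke Leech's theorem or the parameter $s_\nu$, but read the inequality directly off the diagonal of the positive kernel produced by $I-\tau\tau^*\ge 0$, after computing $\xi^*\begin{pmatrix}I_p&-s(\lambda)\end{pmatrix}\Theta_\nu(\lambda)$ blockwise. That computation is right: the middle block dies by \eqref{rtyuiop}, the last block collapses via the left-eigenvector identity $\eta^*\bigl(I_q+\frac{\eta\eta^*}{c^*J_{p,q}c}\bigr)=\frac{\xi^*\xi}{c^*J_{p,q}c}\,\eta^*$, and the resulting row is exactly $\begin{pmatrix}\alpha(\lambda)&0&-\beta(\lambda)\end{pmatrix}$, so the signature of $J_{p-1+N,q}$ gives $\|\alpha(\lambda)\|^2\ge\|\beta(\lambda)\|^2$, which is the squared form of \eqref{point}. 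The one point you should make explicit is that the positivity of your kernel rests on $f\mapsto\begin{pmatrix}I_p&-s\end{pmatrix}f$ being a contraction from $\mathcal H(\Theta_\nu)$ into $\mathcal H(s)$, and it is precisely the hypothesis $\xi^*s(\nu)=\eta^*$ (together with $\eta^*\eta<\xi^*\xi$, without which $\Theta_\nu$ and the statement itself are undefined) that makes this map send $f_\nu$ to $k_s(\cdot,\nu)\xi$ isometrically. Your route buys two things: it avoids Leech's theorem entirely, and since you obtain positivity of the whole kernel rather than just its diagonal, the several-point matrix form of Poincar\'e contractivity recorded in the paper's subsequent remark falls out for free. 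What the paper's route buys is the explicit dependence of $\xi^*s-\eta^*$ on the Schur parameter $s_\nu$, which is what gets recycled for the Schur algorithm in the final section.
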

\begin{proof}
By virtue of $\xi^*s(\nu)=\eta^*$ being true, we shall use Theorem \ref{schur-arveson} via \eqref{inter-3456}.
Let
\[
  \Theta_\nu=\begin{pmatrix}A&B\\C&D\end{pmatrix}
\]
with
\[
  \begin{split}
    A(\lambda)&=\begin{pmatrix}\frac{\xi b_\nu(\lambda)}{\sqrt{c^*J_{p,q}c}}&\begin{pmatrix}U_1\\U_3\end{pmatrix}\end{pmatrix}\\
B(\lambda)&=\frac{\xi\eta^*}{c^*J_{p,q}c}\left(I_q+\frac{\eta\eta^*}{c^*J_{p,q}c}\right)^{-1/2}\\
C(\lambda)&=\begin{pmatrix}\frac{\eta b_\nu(\lambda)}{\sqrt{c^*J_{p,q}c}}& 0\end{pmatrix}\\
    D(\lambda)&=\left(I_q+\frac{\eta\eta^*}{c^*J_{p,q}c}\right)^{1/2},
\end{split}
\]
so that $s(\lambda)=(A(\lambda)s_\nu(\lambda)+C(\lambda))(C(\lambda)s_\nu(\lambda)+D(\lambda))^{-1}$.
From formula \eqref{inter-3456} we can write
\[
  \begin{split}
    \xi^*A(\lambda)&=\begin{pmatrix}\frac{\xi^*\xi b_\nu(\lambda)}{\sqrt{c^*J_{p,q}c}}&\begin{pmatrix}0\\0\end{pmatrix}\end{pmatrix}\quad\hspace{1cm}\quad({\rm since}\,\, \xi^*\begin{pmatrix}U_1\\U_3\end{pmatrix}=0)\\
    \xi^*B(\lambda)&=\frac{\xi^*\xi\eta^*}{c^*J_{p,q}c}\left(I_q+\frac{\eta\eta^*}{c^*J_{p,q}c}\right)^{-1/2}\\
    \eta^*C(\lambda)&=\begin{pmatrix}\frac{\eta^*\eta b_\nu(\lambda)}{\sqrt{c^*J_{p,q}c}}& 0\end{pmatrix}\\
    \eta^*D(\lambda)&=\eta^*\left(I_q+\frac{\eta\eta^*}{c^*J_{p,q}c}\right)^{1/2}.
    \end{split}
  \]
  Hence
  \[
    \begin{split}
      \xi^*A(\lambda)-\eta^*C(\lambda)&=\begin{pmatrix}\frac{\xi^*\xi b_\nu(\lambda)}{\sqrt{c^*J_{p,q}c}}&\begin{pmatrix}0\\0\end{pmatrix}\end{pmatrix}-\begin{pmatrix}\frac{\eta^*\eta b_\nu(\lambda)}{\sqrt{c^*J_{p,q}c}}& 0\end{pmatrix}\\
&=\begin{pmatrix}\frac{(\xi^*\xi-\eta^*\eta) b_\nu(\lambda)}{\sqrt{c^*J_{p,q}c}}&\begin{pmatrix}0\\0\end{pmatrix}\end{pmatrix}\\
 &=\begin{pmatrix}\sqrt{c^*J_{p,q}c}b_\nu(\lambda)&0\end{pmatrix}
    \end{split}
  \]
      and
      \[
    \begin{split}
      \xi^*B(\lambda)-\eta^*D(\lambda)&=\left(\frac{\xi^*\xi\eta^*}{c^*J_{p,q}c}-\eta^*\left(I_q+\frac{\eta\eta^*}{c^*J_{p,q}c}\right)\right)\left(I_q+\frac{\eta\eta^*}{c^*J_{p,q}c}\right)^{-1/2}\\
      &=\left(\frac{\xi^*\xi-c^*J_{p,q}c+\eta^*\eta}{c^*J_{p,q}c}\right)\eta^*\left(I_q+\frac{\eta\eta^*}{c^*J_{p,q}c}\right)^{-1/2}\\
      &=0.
    \end{split}
  \]

  Thus
  \[
    \begin{split}
      \left(\xi^*s(\lambda)-\eta^*\right)&=(\xi^*A(\lambda)s_\nu(\lambda)+\xi^*B(\lambda))(C(\lambda)s_\nu(\lambda)+D(\lambda))^{-1}-\eta^*\\
      &=\left((\xi^*A(\lambda)s_\nu(\lambda)+\xi^*B(\lambda))-(\eta^*C(\lambda)s_\nu(\lambda)+\eta^*D(\lambda))\right)\times\\
      &\hspace{5mm}\times(C(\lambda)s_\nu(\lambda)+D(\lambda))^{-1}\\
      &=\left((\xi^*A(\lambda)-\eta^*C(\lambda))s_\nu(\lambda)+(\xi^*B(\lambda))-\eta^*D(\lambda))\right)\times\\
      &\hspace{5mm}\times(C(\lambda)s_\nu(\lambda)+D(\lambda))^{-1}\\
      &=\left(\begin{pmatrix}\sqrt{c^*J_{p,q}c}b_\nu(\lambda)&0\end{pmatrix})s_\nu(\lambda)\right)\times\\
      &\hspace{5mm}\times
      \left(\begin{pmatrix}\frac{\eta b_\nu(\lambda)}{\sqrt{c^*J_{p,q}c}}& 0_{p\times(p-1)}\end{pmatrix}
      s_\nu(\lambda)+\left(I_q+\frac{\eta\eta^*}{c^*J_{p,q}c}\right)^{1/2}\right)^{-1}.
    \end{split}
  \]

  So

  \begin{equation*}
    \begin{split}
      \left(  (\eta^*-\xi^*s(\lambda))\begin{pmatrix}\frac{\eta b_\nu(\lambda)}{\sqrt{c^*J_{p,q}c}}& 0_{p\times(p-1)}\end{pmatrix}+\begin{pmatrix}\sqrt{c^*J_{p,q}c}b_\nu(\lambda)&0_{p\times (p-1)}
        \end{pmatrix}\right)s_\nu(\lambda)&=\\
      &\hspace{-4cm}=(\xi^*s(\lambda)-\eta^*)\left(I_q+\frac{\eta\eta^*}{c^*J_{p,q}c}\right)^{1/2}.
    \end{split}
  \end{equation*}
But
  \[
    \begin{split}
      \left(  (\eta^*-\xi^*s(\lambda))\begin{pmatrix}\frac{\eta b_\nu(\lambda)}{\sqrt{c^*J_{p,q}c}}& 0_{p\times (p-1)}\end{pmatrix}+\begin{pmatrix}\sqrt{c^*J_{p,q}c}b_\nu(\lambda)&0_{1\times (p-1)}\end{pmatrix}\right)&=\\
&\hspace{-4cm} =     \begin{pmatrix}\left(\frac{\eta^*\eta-\xi^*s(\lambda)\eta}{\sqrt{c^*J_{p,q}c}}+\sqrt{c^*J_{p,q}c}\right)b_\nu(\lambda)&0_{1\times (p-1)}\end{pmatrix}\\
      &\hspace{-4cm}=\begin{pmatrix}\left(\frac{\eta^*\eta-\xi^*s(\lambda)\eta+c^*J_{p,q}c}{\sqrt{c^*J_{p,q}c}}\right)b_\nu(\lambda)&0_{1\times (p-1)}\end{pmatrix}\\
      &\hspace{-4cm}=        \frac{\xi^*\xi-\xi^*s(\lambda)\eta}{\sqrt{c^*J_{p,q}c}}\begin{pmatrix}b_\nu(\lambda)&0_{1\times (p-1)}\end{pmatrix}
\end{split}
    \]

from which follows \eqref{point} since $s_\nu$ is contractive.
\end{proof}

\begin{remark}
When $p=q=N=1$ and $\eta=\overline{s(a)}$ and $\xi=1$,  this reduces to (with $z=\lambda$)
\[
|(s(z)-s(a)|\frac{1}{\sqrt{1-|s(a)|^2}}\le\left|\frac{1-s(z))\overline{s(a)}}{\sqrt{1-|s(a)|^2}}b_a(z)\right|,
  \]
i.e.
\[
  |s(z)-s(a)|\le|b_a(z)|\cdot \left|1-s(z))\overline{s(a)}\right|,
\]
which is \eqref{schur-one-point}.
\end{remark}

\begin{remark} We can generalize Theorem \ref{11-August-2023-3} to several points instead of one.
Let
  \[      \alpha(\lambda)=\frac{\xi^*\xi-\xi^*s(\lambda)\eta}{\sqrt{c^*J_{p,q}c}}\begin{pmatrix}b_\nu(\lambda),&0_{1\times (p-1)}\end{pmatrix} \text{ and }
      \beta(\lambda)=(\xi^*s(\lambda)-\eta^*)\left(I_q+\frac{\eta\eta^*}{c^*J_{p,q}c}\right)^{1/2}.
      \]
Then, $\alpha$ is $\mathbb C^{1\times N+p-1}$-valued and $\beta$ is $\mathbb C^{1\times q}$-valued.   Since $\alpha(\lambda)s_\nu(\lambda)=\beta(\lambda)$,
  the kernel
  \[
\frac{\alpha(\lambda)\alpha(\mu)^*-\beta(\lambda)\beta(\mu)^*}{1-\langle \lambda,\mu\rangle}=\alpha(\lambda)\frac{I_{N+p-1}-s_\nu(\lambda)s_\nu(\mu)^*}{1-\langle \lambda,\mu\rangle}\alpha(\mu)^*
    \]
    is positive definite.
    So, for any $M\in\mathbb N$ and for any $\lambda_1,\ldots, \lambda_M\in\mathbb B_N$ and $u_1,\ldots, u_M\in \mathbb C$, we get the inequality
    \[
      \sum_{m,n=1}^M u_n^*\frac{\alpha(\lambda_n)\alpha(\lambda_m)^*-\beta(\lambda_n)\beta(\lambda_m)^*}{1-\langle \lambda_n,\lambda_m\rangle}u_m\ge 0
\]
      which leads the matrix versions of the Poincar\'e contractivity

      \[
        \left(
          \frac{\alpha(\lambda_n)\alpha(\lambda_m)^*}{1-\langle \lambda_n,\lambda_m\rangle}\right)_{n,m=1}^M\ge
        \left(\frac{\beta(\lambda_n)\beta(\lambda_m)^*}{1-\langle \lambda_n,\lambda_m\rangle}\right)_{n,m=1}^M
      \]

    i.e.

    \begin{equation*}
      \begin{split}
      \left(
        \frac{(\xi^*\xi-\xi^*s(\lambda_n)\eta)(\xi^*\xi-\xi^*s(\lambda_m)\eta)^*}{c^*J_{p,q}c}
        \frac{b_\nu(\lambda_n)b_\nu(\lambda_m)^*}{1-\langle \lambda_n,\lambda_m\rangle}\right)_{n,m=1}^M&\ge\\
        &\hspace{-6cm}\ge\left(
          \frac{(\xi^*s(\lambda_n)-\eta^*)\left(I_q+\frac{\eta\eta^*}{c^*J_{p,q}c}\right)(s(\lambda_m)\xi-\eta)}{1-\langle \lambda_n,\lambda_m\rangle}
        \right)_{n,m=1}^M
              \end{split}
\end{equation*}

              or

              \begin{equation*}
                \begin{split}
      \hspace{-2cm}\left(  \frac{b_\nu(\lambda_n)b_\nu(\lambda_m)^*}{1-\langle \lambda_n,\lambda_m\rangle}\right)_{n,m=1}^M&\ge\\
 &\hspace{-3cm}\frac{1}{c^*J_{p,q}c}\left(
        \frac{  \frac{\xi^*s(\lambda_n)-\eta^*)}{(\xi^*\xi-\xi^*s(\lambda_n)\eta)}\left(I_q+\frac{\eta\eta^*}{c^*J_{p,q}c}\right)
            \frac{(s(\lambda_m)\xi-\eta)}{(\xi^*\xi-\xi^*s(\lambda_m)\eta)^*}}{1-\langle \lambda_n,\lambda_m\rangle}\right)_{n,m=1}^M.
    \end{split}
\end{equation*}
\end{remark}

\section{Schur algorithm and Poincar\'e contractivity for multipliers of $\mathcal H(K_\beta)$}
    \setcounter{equation}{0}
       \label{sec6}
        Let $\Omega$ be an open subset of the open unit ball, and let $\beta$ be  a $\mathbb B_N$-valued function continuous on $\Omega$. Let
  $K_\beta(z,w)$ denote the kernel
  \begin{equation}
    \label{kbeta}
  K_\beta(z,w)=\frac{1}{1-\langle\beta(z),\beta(w)\rangle_{\mathbb C^N}},\quad z,w\in\Omega.
\end{equation}
Theorem \ref{schur-arveson} allows to write a countertpart of Theorem \ref{Maintheorem} for multipliers of the reproducing kernel Hilbert space $\mathcal H(K_\beta)$ with reproducing kernel \eqref{kbeta}.
       \begin{theorem}
         Let $S$ be a $\mathbb C^{p\times q}$-valued Schur multiplier of $\mathcal H(K_\beta)$, with representation $S(z)G(\beta(z))$, $G$ being a Schur multiplier of $\mathcal A^q$. Let $w_0\in\Omega$ and assume that $\xi^*S(w_0)S(w_0)^*\xi<\xi^*\xi$. Then, in the notation of the previous theorem,
           \begin{equation}
    \label{inter-3456}
    \begin{split}
    S(z)&=\left(\begin{pmatrix}\frac{\xi b_{\beta(w_0)}(\beta(z))}{\sqrt{c^*J_{p,q}c}}&\begin{pmatrix}U_1\\U_3\end{pmatrix}\end{pmatrix}
    s_{\beta(w_0)}(\beta(z))+\frac{\xi\eta^*}{c^*J_{p,q}c}\left(I_q+\frac{\eta\eta^*}{c^*J_{p,q}c}\right)^{-1/2}\right)\times\\
    &\hspace{5mm}\times\left(\begin{pmatrix}\frac{\eta b_{\beta(w_0)}(\beta(z))}{\sqrt{c^*J_{p,q}c}}
& 0\end{pmatrix}
      s_{\beta(w_0)}(\beta(z))+\left(I_q+\frac{\eta\eta^*}{c^*J_{p,q}c}\right)^{1/2}\right)^{-1}
      \end{split}
\end{equation}
\end{theorem}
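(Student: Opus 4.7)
The plan is to reduce this statement to Theorem \ref{schur-arveson} via the factorization $S = G \circ \beta$, which is given as part of the hypothesis (and which in any case would follow from part 3 of Theorem 3.1 in \cite{btv}, as already used in the proof of Theorem \ref{Maintheorem}). Set $\nu := \beta(w_0) \in \mathbb B_N$ and $\eta^* := \xi^* G(\nu) = \xi^* S(w_0)$. Then the assumed inequality $\xi^* S(w_0) S(w_0)^* \xi < \xi^* \xi$ translates exactly to $\eta^* \eta < \xi^* \xi$, which is the standing hypothesis of Theorem \ref{schur-arveson}, and by construction $\xi^* G(\nu) = \eta^*$.

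Apply Theorem \ref{schur-arveson} to the $\mathbb C^{p\times q}$-valued Schur multiplier $G$ of $\mathcal A_N$ at the point $\nu$. This produces a $\mathbb C^{(p-1+N)\times q}$-valued Schur multiplier $g_\nu$ of $\mathcal A_N$ such that $G(\lambda)$ admits, for $\lambda\in\mathbb B_N$, the linear fractional representation of \eqref{inter-3456} with $g_\nu$ in place of $s_\nu$ and $\nu$ in place of $\beta(w_0)$. Substituting $\lambda = \beta(z)$ in that formula and defining $s_{\beta(w_0)}(z) := g_\nu(\beta(z))$ delivers precisely the claimed expression for $S(z) = G(\beta(z))$, since the outer matrix blocks depend on $z$ only through the composition $b_{\beta(w_0)}(\beta(z))$ and through constants ($\xi$, $\eta$, $U_1$, $U_3$, $c^*J_{p,q}c$).

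The final step is to verify that $s_{\beta(w_0)}$ is a Schur multiplier of $\mathcal H(K_\beta)$ of the correct size. This is the same restriction argument used at the end of the proof of Theorem \ref{Maintheorem}: the kernel $(I - g_\nu(\lambda)g_\nu(\mu)^*)/(1 - \langle \lambda,\mu\rangle)$ is positive on $\mathbb B_N \times \mathbb B_N$ because $g_\nu\in \mathcal M_{a_N}^{(p-1+N)\times q}$, and restricting to $\beta(\Omega)\subset \mathbb B_N$ yields positivity of
\[
\frac{I - s_{\beta(w_0)}(z) s_{\beta(w_0)}(w)^*}{1 - \langle \beta(z), \beta(w)\rangle}
\]
on $\Omega \times \Omega$. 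There is no serious obstacle here; the substantive work has already been done in Theorem \ref{schur-arveson}, and the present result is essentially a substitution together with this standard positivity-restriction step. The main thing to monitor is consistency of the matrix dimensions under the substitution $\lambda \mapsto \beta(z)$, but since the outer blocks inherit their dimensions directly from the Drury--Arveson case, this is immediate.
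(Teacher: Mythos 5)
Your proof is correct and follows exactly the route the paper intends (the paper omits the proof, stating only that it "follows the same lines as the proof of Theorem \ref{Maintheorem}"): factor $S=G\circ\beta$, apply Theorem \ref{schur-arveson} to $G$ at $\nu=\beta(w_0)$ with $\eta^*=\xi^*S(w_0)$, substitute $\lambda=\beta(z)$, and restrict the positivity of the multiplier kernel to $\beta(\Omega)$. The only quibble is notational: you should set $s_{\beta(w_0)}:=g_\nu$ (a multiplier on $\mathbb B_N$) rather than $s_{\beta(w_0)}(z):=g_\nu(\beta(z))$, since the displayed formula already evaluates $s_{\beta(w_0)}$ at $\beta(z)$.
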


The proof follows the same lines as in the proof of Theorem \ref{Maintheorem} and will be omitted.\smallskip

One can iterate the preceding procedure; the Schur parameters are each time of a larger size, but if $N=1$. In the latter case, one gets back to the Schur algorithm developped in \cite{MR1246809}. One obtains Blaschke products of the kind studied in \cite{MR3735586}.

The counterpart of Poincar\'e contractivity is now:
\begin{equation*}
    \left\|(\xi^*S(z)-\eta^*)\left(I_q+\frac{\eta\eta^*}{c^*J_{p,q}c}\right)^{1/2}\right\|\le    \frac{|\xi^*\xi-\xi^*S(z)\eta|}{\sqrt{c^*J_{p,q}c}}\cdot\|b_{\beta(w_0)}(z)\|.
  \end{equation*}

  Here too,  the proof follows the same lines as in the proof of Theorem \ref{11-August-2023-3} and is omitted.\smallskip

\textbf{Acknowledgement:}
D. Alpay thanks the Foster G. and Mary McGaw Professorship in Mathematical Sciences, which supported this research,
T. Bhattacharyya is supported by a J C Bose Fellowship JCB/2021/000041 of SERB, A. Jindal is supported by the Prime Minister's Research Fellowship PM/MHRD-20-15227.03, and P. Kumar is partially supported by a
PIMS postdoctoral fellowship. This research is supported by the DST FIST program-2021 [TPN-700661].

\bibliographystyle{plain}

\end{document}